\documentclass[12pt, reqno]{amsart}

\usepackage{amsmath,amsfonts,amsthm,amssymb,amsxtra}
\usepackage{bbm} 
\usepackage{hyperref}	

\usepackage[parfill]{parskip}
\usepackage{tikz, caption}
\usetikzlibrary{shapes}
\allowdisplaybreaks

\newtheorem{theorem}{Theorem}
\newtheorem*{theorem*}{Theorem}

\newtheorem{lemma}[theorem]{Lemma}

\newtheorem*{conjecture*}{Conjecture}

\theoremstyle{remark}
\newtheorem{remark}[theorem]{Remark}
\newtheorem{remarks}[theorem]{Remarks}

\theoremstyle{definition}

\numberwithin{equation}{section}
\newcommand{\Z}{{\mathbb Z}}
\newcommand{\R}{{\mathbb R}}
\newcommand{\C}{{\mathbb C}}

\newcommand{\N}{{\mathbb N}}
\newcommand{\T}{{\mathbb T}}

\renewcommand{\epsilon}{\varepsilon}

\newenvironment{proof*}
  {\par\pushQED{\qed}\normalfont\trivlist\item\relax}
  {\popQED\endtrivlist}

\newcommand{\Dt}{\,\mathcal{D}}

\begin{document}

\title[]{Sharp Asymptotics for Higher-Order Hardy Constants on Lattices}
\markright{}
\author[]{By Shubham Gupta}
\address{Department of Mathematics, Technion – Israel Institute of Technology, Haifa 32000, Israel.}
\email{sg1019@campus.technion.ac.il}

\keywords{Discrete Hardy inequalities, Hardy inequalities on torus, ground state representation, Bochner identity}
\subjclass[2020]{39A12, 26D10, 35A23}
\thanks{This research was supported by ISF-DFG Lead Agency research grant (ISF’s No. 1531/25)}

\begin{abstract}
We study the optimal constants in higher-order Hardy inequalities on the lattice $\Z^d$. For each fixed $\ell \in \N$, we prove that the optimal constant $\mathcal{C}_\text{opt}^\ell(d)$ in
$$
\sum_{n \in \Z^d} |\Delta^{\ell/2}u(n)|^2 \geq \mathcal{C}_\text{opt}^\ell(d)\sum_{n \in \Z^d} \frac{|u(n)|^2}{|n|^{2\ell}}.
$$
satisfies
$$
\lim_{d\rightarrow\infty}\frac{\mathcal{C}_\text{opt}^\ell(d)}{d^\ell}
=2^\ell.
$$
The proof is based on a Fourier reduction to a family of singular Hardy inequalities on the flat torus, involving the weight
\[
\omega(x)^{-2\ell},
\qquad
\omega(x)^2=\sum_{j=1}^d\sin^2\left(\frac{x_j}{2}\right),
\]
and zero average condition on admissible functions. We establish these torus inequalities by combining a ground state representation formula with a weighted integrated Bochner identity in an iterative scheme. The method yields explicit constants, defined recursively in the order $\ell$, and requires only the classical unweighted Poincar\'e inequality on the torus. The appearance of the limiting constant $2^\ell$ is particularly striking, as it suggests that, in the high dimensional regime, the optimizers are localized near the unit sphere $\{n\in\mathbb Z^d:|n|=1\}$ in $\Z^d$.

\end{abstract}

\maketitle

\section{Introduction}\label{sec: intro} 
The classical Hardy inequality on $\R^d$ reads as: for $d \geq 3$ and $u \in C_0^\infty(\R^d)$ we have  
\begin{equation}\label{classical Hardy}
    \int_{\R^d} |\nabla u(x)|^2 dx \geq \frac{(d-2)^2}{4}\int_{\R^d} \frac{|u(x)|^2}{|x|^2} dx.
\end{equation}
This and other forms of Hardy inequalities have been extensively studied in the continuum as they provide fundamental tools across PDEs, spectral theory, mathematical physics, and probability \cite{balinsky, davies}. One of the prominent features of this inequality is that the inverse square Hardy weight enjoys many desirable optimality properties. Particularly it is \emph{critical}, meaning that the weight cannot be increased pointwise. As a consequence, the constant in \eqref{classical Hardy} is sharp, that is, it cannot be replaced by a strictly bigger real number.    

In this article, we shall be concerned with the discrete analogue of classical Hardy inequality \eqref{classical Hardy} on $\Z^d$, and our focus would be on the optimality of the constant. In dimension one, it is an old well-known result that goes back to Hardy \cite{kufner}, and the optimal constant is known to be $1/4$ (coinciding with the corresponding constant on $\R$). The higher dimensional case $d \geq 2$ was studied much later, first by Rozenblum and Solomyak \cite{RS1, RS2}, and then by Kapitanski and Laptev \cite{laptev-kapitanski}. However, the question of optimality of constant was left open in these works. Afterwards, inspired by the works in the continuum \cite{pinchover}, Keller, Pogorzelski and Pinchover \cite{kpp} used the \emph{supersolution construction} method to construct \emph{optimal Hardy weights} on infinite graphs. This method allows to construct optimal weights from positive superharmonic functions. Here optimality of a Hardy weight is a strong notion, which comprises three conditions: criticality, non-existence of minimizers, and optimality near infinity (in fact the first two conditions implies the third one, see for instance \cite{hake}). The last one is the most relevant for us and it means that outside of finite sets, one cannot increase the weight by a factor bigger than one. Their methods, when specialized on $\Z^d$, for $d \geq 3$, produce the following optimal Hardy weight as $|n| \rightarrow \infty$
\begin{equation}\label{Green weight asymptotics}
    w_\text{opt}(n) = \frac{\Delta \sqrt{G}}{\sqrt{G}} = \frac{(d-2)^2}{4} \frac{1}{|n|^2} + \mathcal{O}\left(\frac{1}{|n|^3}\right), 
\end{equation}
where $G$ denotes the minimal positive Green function of $\Delta$ on $\Z^d$. Notably, the leading term coincides with the classical weight as well as the continuum constant and has some lower order correction terms. From the asymptotics \eqref{Green weight asymptotics} and the optimality near infinity of $w_\text{opt}$, it follows that the best constant $\mathcal{C}_\text{opt}^1(d)$ in the classical Hardy inequality (the superscript $1$ stands for the order of the gradient $\nabla$, see \eqref{def: Hardy constant}) is at most $(d-2)^2/4$. One naturally wonders if the upper bound is attained, in particular if the correction terms in \eqref{Green weight asymptotics} are non-negative, which turns out to be a hard problem. The main difficulty in determining the sign of the lower order terms is the non-explicit nature of the Green function on $\Z^d$, making it challenging to use supersolution approach to determine $\mathcal{C}_\text{opt}^1(d)$. 

In \cite{gupta}, the author observed an interesting phenomenon that contrasts with the corresponding situation in the continuum. It was shown that the best constant $\mathcal{C}_\text{opt}^1(d)$ satisfies
$$
0 < \liminf_{d \rightarrow \infty} \frac{\mathcal{C}_\text{opt}^1(d)}{d} \leq \limsup_{d \rightarrow \infty}\frac{\mathcal{C}_\text{opt}^1(d)}{d} < \infty.
$$
This shows that the optimal Hardy constant in $\Z^d$ grows linearly in $d$ for large dimensions, in contrast to the quadratic growth of the continuum constant in $\R^d$ \eqref{classical Hardy}. Interestingly, this fact yields information about the asymptotic behaviour of the Green function. This implicitly shows that, in sufficiently large dimensions, the correction terms in the optimal weight \eqref{Green weight asymptotics} cannot all be non-negative. Consequently, the classical Hardy inequality cannot be deduced from \eqref{Green weight asymptotics} in the high-dimensional regime.

This work, however, left a natural question unanswered: is the linear growth sharp? That is, does the limit
$$\lim_{d \rightarrow \infty} \frac{\mathcal{C}_\text{opt}^1(d)}{d}$$
exist? The methods of this paper allow us to show that it does and the limit can be computed explicitly. The arguments can be scaled to yield similar statements for operators of arbitrary order $\Delta^{\ell/2}$, for $\ell \in \N$. In passing, we mention several related works concerning Hardy inequalities for higher order operators \cite{kpp-rellich, keller-lemm, gupta2, stampach1, huang-Ye1D,  stampach3}.

\subsection{Main results}
Let $\nabla$ and $\Delta$ be the discrete gradient and Laplacian on $\Z^d$ given by
$$
\nabla u(n) = (\nabla_1 u(n), \dots, \nabla_d u(n)), \hspace{11pt}\Delta u(n) = \sum_{j=1}^d \Big(2 u(n)- u(n-e_j) - u(n+ e_j) \Big), 
$$
where $\nabla_j$ is the difference operator in the $j^{th}$ direction 
$$
\nabla_j u(n) = u(n)-u(n-e_j),
$$
and $e_j$ is the $j^{th}$ standard basis of $\R^d$. 

Let $\ell \in \N$, then the powers of $\Delta$ are given by
\begin{equation}\label{def: powers of Laplacian}
    \begin{split}
    \Delta^{\ell/2} u := 
    \begin{cases}
        \Delta^{k} u,  &\text{if} \hspace{5pt}\ell = 2k, \vspace{5pt}\\
        \nabla (\Delta^k u),  &\text{if} \hspace{5pt} \ell = 2k+1.
    \end{cases}
    \end{split}
\end{equation}
Let $u \in C_c(\Z^d)$ be a finitely supported function on $\Z^d$ with $u(0) = 0$. Let $\mathcal{C}_\text{opt}^\ell(d)$ denotes the optimal constant in the discrete Hardy inequality for the operator $\Delta^{\ell/2}$ 
\begin{equation}\label{def: Hardy constant}
    \sum_{n \in \Z^d} |\Delta^{\ell/2}u(n)|^2 \geq \mathcal{C}_\text{opt}^\ell(d)\sum_{n \in \Z^d} \frac{|u(n)|^2}{|n|^{2\ell}}.
\end{equation}
In \cite{gupta}, the author proved that the best constant behaves asymptotically as
$$
0 < \liminf_{d \rightarrow \infty} \frac{\mathcal{C}_\text{opt}^\ell(d)}{d^\ell} \leq \limsup_{d \rightarrow \infty}\frac{\mathcal{C}_\text{opt}^\ell(d)}{d^\ell} < \infty.
$$
For comparison, the corresponding constant in $\R^d$ grows as $d^{2\ell}$ as $d \rightarrow \infty$ \cite{davies-hinz, yafaev}. Our main result establishes the existence of the above limit and computes its value explicitly.

\begin{theorem}\label{thm: discrete asymptotics}
Let $\mathcal{C}_\text{opt}^\ell(d)$ denote the best constant in \eqref{def: Hardy constant}. Then we have
$$
\lim_{d \rightarrow \infty} \frac{\mathcal{C}_\text{opt}^\ell(d)}{d^\ell} = 2^{\ell}.
$$
\end{theorem}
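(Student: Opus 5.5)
The plan is to prove matching upper and lower bounds, both via Fourier analysis on $\T^d=[-\pi,\pi]^d$. For $u\in C_c(\Z^d)$ write $\hat u(x)=\sum_n u(n)e^{in\cdot x}$. The symbol of $\Delta$ is $4\omega(x)^2$ with $\omega(x)^2=\sum_j\sin^2(x_j/2)$, and $|\widehat{\nabla_j u}|^2=4\sin^2(x_j/2)|\hat u|^2$. By Plancherel, in both parities of $\ell$,
$$
\sum_n|\Delta^{\ell/2}u(n)|^2=(2\pi)^{-d}\int_{\T^d}(4\omega^2)^\ell|\hat u|^2\,dx .
$$

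\emph{Upper bound.} Take $u=\delta_{e_1}$, which satisfies $u(0)=0$. Then the right-hand side of \eqref{def: Hardy constant} equals $1$, and
$$
\mathcal{C}_\text{opt}^\ell(d)\le (2\pi)^{-d}\int_{\T^d}(4\omega^2)^\ell\,dx=4^\ell\,\mathbb{E}\Big[\Big(\sum_{j=1}^d\sin^2(X_j/2)\Big)^\ell\Big],
$$
where the $X_j$ are i.i.d.\ uniform on $[-\pi,\pi]$. Each $\sin^2(X_j/2)$ has mean $1/2$ and bounded variance. Expanding the $\ell$-th moment gives $(d/2)^\ell+O(d^{\ell-1})$. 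Hence $\limsup_{d\to\infty} d^{-\ell}\mathcal{C}_\text{opt}^\ell(d)\le 4^\ell 2^{-\ell}=2^\ell$. This step is routine.

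\emph{Lower bound: reduction to the torus.} Set $T=|n|^{-\ell}(\Delta^{\ell/2})^{-1}$, defined on the range of $\Delta^{\ell/2}$ over $\{u(0)=0\}$. Inequality \eqref{def: Hardy constant} says exactly that $\|T\|^2\le 1/\mathcal{C}$. Passing to the adjoint and taking Fourier transforms, this becomes the following statement. For $F$ a trigonometric polynomial on $\T^d$ (the Fourier transform of $\psi=|n|^{-\ell}\varphi$) with zero average, which is the dual of the constraint $u(0)=0$, one has
$$
\int_{\T^d}|D^\ell F|^2\,dx\;\ge\;\frac{\mathcal{C}}{4^\ell}\int_{\T^d}\frac{|F|^2}{\omega^{2\ell}}\,dx .
$$
Here $|D^\ell F|^2$ denotes the sum of squares of all $\ell$-th order partial derivatives, corresponding to $|n|^{2\ell}|\psi|^2$. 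So it suffices to prove this torus Hardy inequality with constant $K_\ell(d)$ satisfying $K_\ell(d)\ge (d/2)^\ell(1-o(1))$.

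The heuristic is clear. $\omega^2$ concentrates at $d/2$, and the unweighted Poincar\'e inequality $\int|D^\ell F|^2\ge\int|F|^2$ holds for zero-average $F$. The whole difficulty is the singular region where $\omega$ is small, namely near $x=0$, where $\omega^{-2\ell}$ is not integrable against arbitrary $F$.

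\emph{Lower bound: the torus inequality, by induction on $\ell$.} For $\ell=1$ I would use a ground state representation. Pick a vector field, e.g.\ $V=\nabla\log\omega^{-a}$ or more concretely $V_j=\sin x_j/\omega^2$. Expand
$$
0\le\int|\nabla F+\lambda VF|^2 .
$$
Integrating by parts the cross term gives $-\lambda\int(\operatorname{div}V)|F|^2$, with
$$
\operatorname{div}V=\sum_j\frac{\cos x_j}{\omega^2}-\frac{\sum_j\sin^2x_j}{\omega^4}.
$$
Use $\cos x_j=1-2\sin^2(x_j/2)$, so $\sum_j\cos x_j=d-2\omega^2$. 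This produces a weight of the form $\big(\lambda(d-2\omega^2)-\lambda(1+\lambda)\sum_j\sin^2 x_j/\omega^2\big)/\omega^2$, up to bounded errors. The term $-2\lambda$ is bounded and is absorbed using Poincar\'e, at the cost of a fraction of $\int|\nabla F|^2$. Optimizing $\lambda$ should yield a weight close to $(d/2)\,\omega^{-2}$, up to a lower-order loss; the bound $\sum_j\sin^2x_j\le4\omega^2$ controls the negative term. I would check that the constant obtained is $\tfrac d2-O(1)$ or $\tfrac d2(1-o(1))$, by splitting $\int|\nabla F|^2$ between the ground-state estimate and Poincar\'e with a tuned parameter.

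For higher $\ell$ I would iterate, applying the $\ell-1$ inequality to $\partial_jF$ with the weight $\omega^{-2(\ell-1)}$. Two points need care here. First, $\partial_jF$ automatically has zero average. Second, I then need a weighted first-order inequality $\int|\nabla G|^2\omega^{-2m}\ge K\int|G|^2\omega^{-2m-2}$. For this weighted step I would use a weighted integrated Bochner identity, which converts $\sum_{i,j}\int|\partial_i\partial_jF|^2\omega^{-2m}$ into $\int|\Delta F|^2\omega^{-2m}$ plus commutator terms involving derivatives of $\omega^{-2m}$, together with the same ground state trick with vector field $\omega^{-2m}V$.

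\emph{Main obstacle.} I expect the main difficulty to be precisely these weighted steps: showing that the error terms created by differentiating the weight $\omega^{-2m}$ are of relative size $O(1/d)$. Such errors are of order $m^2\,|\nabla\omega|^2/\omega^2\lesssim m^2$, compared with the main term of order $d$. If that works, the recursive constants satisfy $K_\ell\ge K_{\ell-1}(d/2)(1-O(1/d))$. This gives $\liminf_{d\to\infty} d^{-\ell}\mathcal{C}_\text{opt}^\ell(d)\ge 2^\ell$, which combined with the upper bound proves Theorem~\ref{thm: discrete asymptotics}.
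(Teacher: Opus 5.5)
Your upper bound with $u=\delta_{e_1}$ is correct, and it is simpler than the paper's unit-sphere test function. Your reduction to the zero-average torus inequality $\int|\Delta^{\ell/2}F|^2\ge K\int|F|^2\omega^{-2\ell}$, giving $\mathcal{C}_\text{opt}^\ell(d)\ge 4^\ell K$, is the paper's reduction. Strictly, the adjoint of the constrained problem has $\min_c\int\omega^{-2\ell}|F-c|^2$ on the left, so your torus inequality is sufficient rather than equivalent; that is all you use. Your $\ell=1$ argument is exactly the paper's, since $\lambda V=-\nabla g/g$ for $g=\omega^{2\beta}$ with $\beta=-2\lambda$.

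\textbf{The gap is the step $\ell-1\to\ell$.} You postulate a clean weighted inequality $\int|\nabla G|^2\omega^{-2m}\ge K\int|G|^2\omega^{-2m-2}$ with $K\approx d/2$. You locate the difficulty in the terms produced by differentiating $\omega^{-2m}$, but those are harmless ($O(m)$ against $O(d)$). The real obstruction is the zeroth-order term you already met at $\ell=1$. The identity $\sum_j\cos x_j=d-2\omega^2$ produces it again, and what the weighted ground-state argument actually yields is Lemma~\ref{lem: weight Hardy of order one}:
\[
\int|\nabla G|^2\omega^{-2m}-\beta\int|G|^2\omega^{-2m}\ge h_m(\beta)\int|G|^2\omega^{-2m-2},\qquad \beta<0.
\]
This term is not lower order. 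Indeed $h_m(\beta)+\beta\omega^2=-\beta^2+\beta(\omega^2-\tfrac d2)+(m+1)\beta$, so on the bulk where $\omega^2\approx d/2$ the leading part $-\beta d/2$ of $h_m(\beta)$ is cancelled. The factor $d/2$ must therefore come from a Poincar\'e-type bound on $\int|G|^2\omega^{-2m}$.

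For $m\ge1$ the unweighted Poincar\'e inequality does not give such a bound. A weighted Poincar\'e inequality with constant $1+o(1)$ would be a genuinely nontrivial extra input; this is where Huang--Ye need concentration methods. The Bochner identity cannot help either, since a first-order inequality involves no second derivatives. So your recursion $K_\ell\ge K_{\ell-1}\tfrac d2(1-O(1/d))$ is not established.

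\textbf{The missing idea} is to keep the error term and absorb it with the induction hypothesis. Apply the order-$(\ell-1)$ inequality to each $\partial_jF$ and sum, using Parseval ($\sum_j|n|^{2(\ell-1)}n_j^2=|n|^{2\ell}$). This gives $\int|\Delta^{\ell/2}F|^2\ge K_{\ell-1}\int|\nabla F|^2\omega^{-2(\ell-1)}$. Inserting the display with $m=\ell-1$ leaves the error $-\beta K_{\ell-1}\int|F|^2\omega^{-2(\ell-1)}$ on the left. Applying the order-$(\ell-1)$ inequality to $F$ itself, and then the unweighted Poincar\'e inequality, bounds it by $-\beta\int|\Delta^{(\ell-1)/2}F|^2\le-\beta\int|\Delta^{\ell/2}F|^2$. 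Hence $(1-\beta)\int|\Delta^{\ell/2}F|^2\ge K_{\ell-1}h_{\ell-1}(\beta)\int|F|^2\omega^{-2\ell}$, and $\beta=-\sqrt d$ gives the factor $\tfrac d2(1-o(1))$.

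This is the paper's odd-$\ell$ step, and it works verbatim for every $\ell$. Along your route, therefore, the Bochner identity (which the paper uses for even $\ell$) is unnecessary.
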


\begin{remarks}
\begin{itemize}
    \item [(1)] The proof of the asymptotics is quantitative. We derive explicit upper and lower bounds on $\mathcal{C}_\text{opt}^\ell(d)$ for $d > 2\ell$, see \eqref{def: inductive constants (lattice)} and Theorem \ref{thm: explict lattice constant}.
    \item[(2)] The limiting value is particularly interesting. Taking $u$ to be the characteristic function of the unit sphere $\{|n|=1\} \subseteq \Z^d$ in \eqref{def: Hardy constant} yields an upper bound $2^\ell d^{\ell} + \mathcal{O}(d^{\ell-1})$, refer to \eqref{limsup of lattice constant}. This suggests that for $d$ large, the optimizers of the Hardy inequality \eqref{def: Hardy constant} are localized around the unit sphere in $\Z^d$.  
    \item[(3)] Huang and Ye independently proved the same result in \cite{huang-Ye2}. Although the final discrete statements coincide, their approach differs substantially from ours. In particular, the two proofs yield several distinct intermediate results, many of which are of independent interest and complement one another. We refer the reader to Remark \ref{rem: comparison with Huang-Ye} for a more detailed comparison.    
    \end{itemize}
\end{remarks}

The proof the Theorem \ref{thm: discrete asymptotics} is based on ideas developed in \cite{laptev-kapitanski, gupta}. A key ingredient is the conversion of Hardy inequalities \eqref{def: Hardy constant} on $\Z^d$ to an equivalent Hardy-type integral inequalities on the flat torus, thereby bypassing the difficulties arising from the discrete nature of the problem. To this end, we borrow some notations and a starting result from \cite{gupta}.

Let $Q_d := (-\pi, \pi)^d$ denote the open cube in $\R^d$ and let $\psi: \overline{Q_d} \rightarrow \C$ be a function on its closure. We say $\psi$ is $2\pi$-periodic in each variable if 
$$
\psi(x_1, \dots, x_i, -\pi, x_{i+1}, \dots, x_d) = \psi(x_1, \dots, x_i, \pi, x_{i+1}, \dots, x_d),
$$
for all $1 \leq i \leq d$. We denote the corresponding space of smooth functions by
$$
C_\text{per}^\infty (\overline{Q_d}) := \Big\{\psi \in C^\infty(\overline{Q_d}): \psi \hspace{3pt} \text{and its derivatives are} \hspace{3pt}2\pi\text{-periodic in each variable}\Big\}.
$$
Equivalently, it describes the space of smooth functions on the flat torus $\T^d := [-\pi, \pi]^d$ with opposite faces identified. The following key result from \cite[Lemmas 2.2 and 2.3]{gupta} converts \eqref{def: Hardy constant} to a similar question on the torus $\T^d$. On the torus, we use the usual continuum gradient and Laplacian, $\nabla = (\partial_{x_1}, \dots, \partial_{x_d})$, $\Delta=\sum_{i=1}^d \partial_{x_i} \partial_{x_i}$, and its powers are defined as \eqref{def: powers of Laplacian}.

\begin{lemma}\label{lem: discrete to continuous}
Let $\ell \in \N $. Let $ u\in C_c(\Z^d)$ with $u(0)=0$. There exist a function $\psi \in C_\text{per}^\infty(\overline{Q_d})$ having zero average
$$
\int_{Q_d} \psi = 0,
$$
such that
\begin{align*}
    \sum_{n \in \Z^d} \frac{|u(n)|^2}{|n|^{2 \ell}} = \int_{Q_d} |\Delta^{\ell/2} \psi(x)|^2 dx, \hspace{5pt}\text{and} \hspace{5pt} \sum_{n \in \Z^d} |\Delta^{\ell/2} u(n)|^2 = 4^\ell \int_{Q_d} |\Delta^\ell \psi(x)|^2 \omega(x)^{2\ell} dx,
\end{align*}
where 
\begin{align*}
    \omega(x)^2 := \sum_{j=1}^d \sin^2(x_j/2).
\end{align*}
\end{lemma}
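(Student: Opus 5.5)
The plan is to pass to Fourier series and choose $\psi$ so that its Laplacian powers reproduce $u(n)/|n|^{\ell}$. Use the normalization
$$
\hat u(x) = (2\pi)^{-d/2}\sum_{n\in\Z^d} u(n)e^{in\cdot x},
$$
so that Parseval reads $\sum_n |u(n)|^2 = \int_{Q_d}|\hat u|^2$. Since $u$ is finitely supported and $u(0)=0$, we may define
$$
\psi(x) := (2\pi)^{-d/2}\sum_{n\neq 0} \frac{u(n)}{|n|^{2\ell}}\,(-1)^\ell e^{in\cdot x}.
$$
This is a trigonometric polynomial, so $\psi\in C^\infty_{\rm per}(\overline{Q_d})$. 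It has zero average because the $n=0$ coefficient vanishes.

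\textbf{First identity.} Since $\Delta e^{in\cdot x} = -|n|^2 e^{in\cdot x}$, we get $\Delta^k\psi = (2\pi)^{-d/2}\sum_n (-1)^{\ell+k} u(n)|n|^{2k-2\ell}e^{in\cdot x}$.
\begin{itemize}
\item If $\ell=2k$, Parseval gives $\int|\Delta^{k}\psi|^2=\sum_n |u(n)|^2|n|^{-2\ell}$.
\item If $\ell=2k+1$, the gradient contributes a factor $in$, and
$$
\int|\nabla\Delta^k\psi|^2=\sum_n |n|^2|u(n)|^2|n|^{4k-4\ell}=\sum_n |u(n)|^2|n|^{-2\ell}.
$$
\end{itemize}
This proves the first identity.

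\textbf{Second identity.} We have $\Delta^\ell\psi = (2\pi)^{-d/2}\sum_n u(n)e^{in\cdot x}=\hat u$, so the right side equals $4^\ell\int|\hat u|^2\omega^{2\ell}$. On the lattice side, the symbol of $\nabla_j$ is $1-e^{ix_j}$ (under the sign convention chosen), whose modulus squared is $2-2\cos x_j = 4\sin^2(x_j/2)$. The symbol of $\Delta$ is $\sum_j(2-2\cos x_j)=4\omega(x)^2$. Hence $\widehat{\Delta^k u}=4^k\omega^{2k}\hat u$.
\begin{itemize}
\item For $\ell=2k$: $\sum|\Delta^k u|^2=\int 4^{2k}\omega^{4k}|\hat u|^2 = 4^\ell\int\omega^{2\ell}|\hat u|^2$.
\item For $\ell=2k+1$: summing over $j$,
$$
\sum_j\int|1-e^{ix_j}|^2\,4^{2k}\omega^{4k}|\hat u|^2 = 4\omega^2\cdot4^{2k}\omega^{4k}|\hat u|^2 ,
$$
integrated, which again gives $4^\ell\int\omega^{2\ell}|\hat u|^2$.
\end{itemize}

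The argument involves no real obstacle. The only care needed is bookkeeping: fix the Fourier normalization and sign conventions so that no stray factors of $(2\pi)^d$ appear and the shift $n\mapsto n-e_j$ maps to the correct exponential. The essential inputs are the condition $u(0)=0$, which makes the division by $|n|^{2\ell}$ legitimate and forces zero average, and finiteness of the support, which gives smoothness of $\psi$.
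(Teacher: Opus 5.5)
Your proof is correct and follows essentially the route the paper relies on via \cite[Lemmas 2.2, 2.3]{gupta}: take $\psi$ to be the zero-average solution of $\Delta^\ell\psi=\hat u$, whose Fourier coefficients are $\pm u(n)|n|^{-2\ell}$, then apply Parseval together with the symbols $|1-e^{ix_j}|^2=4\sin^2(x_j/2)$ of $\nabla_j$ and $4\omega^2$ of $\Delta$. One small precision: $\psi$ has zero average simply by construction, whereas $u(0)=0$ is what makes $\Delta^\ell\psi$, a sum over $n\neq 0$ only, coincide with the full $\hat u$, and what makes the left-hand side of the first identity well defined.
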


\begin{remark}
The existence result is semi-explicit, and the corresponding function $\psi$ can be expressed in terms of the Fourier transform of $u$ in an \emph{almost} explicit way. Since Fourier transforms are involved, the discrete condition $u(0) = 0$ is transferred to zero average condition on $\psi$.
\end{remark}
As a consequence of the preceding lemma, the discrete Hardy inequality \eqref{def: Hardy constant} can be deduced from corresponding estimates on the torus. We therefore study the associated Hardy-type inequalities on the torus and determine the asymptotic behaviour of their sharp constants.

\begin{theorem}\label{thm: torus asymptotics}
Let $\ell \in \N$ and $ \psi \in C_\text{per}^\infty(\overline{Q_d)}$ having zero average. Let $\mathcal{K}_\text{opt}^\ell(d)$ be the optimal constant in 
\begin{equation}\label{torus Hardy}
    \int_{Q_d} |\Delta^{\ell/2} \psi(x)|^2 dx \geq \mathcal{K}_\text{opt}^\ell(d) \int_{Q_d} \frac{|\psi(x)|^2}{\omega(x)^{2\ell}} dx.
\end{equation}
Then we have the following limiting behaviour
$$
\lim_{d \rightarrow \infty} \frac{\mathcal{K}_\text{opt}^\ell(d)}{d^{\ell}} = 2^{-\ell}.
$$
\end{theorem}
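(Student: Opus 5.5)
We argue separately for the upper bound $\limsup_{d} \mathcal{K}_\text{opt}^\ell(d)/d^\ell \le 2^{-\ell}$ (a test function) and the lower bound $\liminf_d \mathcal{K}_\text{opt}^\ell(d)/d^\ell \ge 2^{-\ell}$ (an inequality). The upper bound should follow by transporting the discrete test function of Remark (2) through Lemma \ref{lem: discrete to continuous}. Equivalently, we can work directly on the torus with the zero-average function $\psi(x)=\sum_{j=1}^d \cos x_j$, which corresponds to the indicator of $\{|n|=1\}$. Then $\Delta^{\ell/2}\psi$ is explicit: $|\Delta^{\ell/2}\psi|^2$ has integral $d\,(2\pi)^d/2$ up to normalisation. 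For the right side, write $\cos x_j = 1-2\sin^2(x_j/2)$, so that $\psi = d-2\omega^2$. Since $\omega^2$ concentrates at $d/2$ with fluctuations of order $\sqrt d$ under the uniform measure, we have
$$\int \frac{\psi^2}{\omega^{2\ell}} = \int \frac{(d-2\omega^2)^2}{\omega^{2\ell}} \approx \frac{\mathbb E\,(d-2\omega^2)^2}{(d/2)^\ell} = \frac{4\operatorname{Var}(\omega^2)}{(d/2)^\ell} = \frac{d/2}{(d/2)^\ell}$$
up to the common factor $(2\pi)^d$. Here $\operatorname{Var}\sin^2(x/2)=1/8$. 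The ratio is therefore $2^{-\ell}d^\ell(1+o(1))$. To make this rigorous we need to control the region where $\omega$ is small. The singularity $\omega^{-2\ell}$ is integrable once $d>2\ell$, and large deviation (Hoeffding) bounds make the contribution from $\{\omega^2<d/4\}$ exponentially small relative to the main term, so a law of large numbers argument suffices.

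For the lower bound we use a ground state representation combined with a Bochner identity, iterated in $\ell$. For $\ell=1$ we write $\psi = \omega^{\alpha} v$ and expand
$$\int|\nabla\psi|^2 = \int \omega^{2\alpha}|\nabla v|^2 + \int \frac{-\Delta(\omega^{\alpha})}{\omega^\alpha}|\psi|^2.$$
We compute $-\Delta\omega^{\alpha}/\omega^{\alpha}$ explicitly using $|\nabla\omega^2|^2=\sum\sin^2(x_j/2)\cos^2(x_j/2)=\omega^2-\sum\sin^4(x_j/2)$ and $\Delta\omega^2 = \tfrac12\sum\cos x_j = \tfrac d2-\omega^2$. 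This gives a potential of the form $c_1(d)\,\omega^{-2}-(\text{lower order})$ with $c_1(d)\sim d/2$, where $\alpha$ is chosen (like $\alpha\approx -1$, the choice optimising the $d$-growth) to maximise the leading coefficient. The leftover lower-order terms are bounded terms like $\omega^{-4}\sum\sin^4$ and constants. The weighted remainder $\int\omega^{2\alpha}|\nabla v|^2$ is used to absorb them. For the constant part we use the zero average condition via the unweighted Poincaré inequality $\int|\nabla\psi|^2\ge\int|\psi|^2$, together with $\omega^2\le d$, so that $\int|\psi|^2 \ge \int |\psi|^2 \omega^2/d$ and errors of size $O(1)$ times $\int|\psi|^2$ cost only $O(1)$ relative to $d$. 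This yields $\mathcal{K}^1_\text{opt}(d)\ge d/2 - O(1)$.

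For higher $\ell$ we iterate. We would write $\int|\Delta\psi|^2$ using a weighted integrated Bochner identity, namely
$$\int \omega^{-2m}|\Delta\psi|^2 = \int\omega^{-2m}|\nabla^2\psi|^2 + (\text{terms with derivatives of }\omega^{-2m}),$$
and reduce the order by one: from $\int\omega^{-2m}|\nabla^k\psi|^2$ to $\int\omega^{-2m-2}|\nabla^{k-1}\psi|^2$, each step producing a factor $\sim d/2$ through the weighted analogue of the $\ell=1$ ground state computation. This gives recursively defined constants $c_\ell(d)=\prod_k(d/2-O(1))$. \textbf{The main obstacle} is the weighted steps. There the zero-average condition does not apply directly to derivatives paired with singular weights, and the error terms from derivatives of $\omega^{-2m}$ must be shown to be $O(d^{\ell-1})$ relative to the main term. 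I would first try to control these errors by Cauchy--Schwarz, borrowing a small fraction of the main term, and use the bounds $|\nabla\omega|\le 1/2$ and $|\Delta \omega^2|\le d$ to check that each error carries one fewer power of $d$.
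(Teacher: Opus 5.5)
Your upper bound is correct but heavier than needed. For an upper bound on $\mathcal{K}^\ell_{\text{opt}}(d)$ you only need a \emph{lower} bound on $\int\psi^2\omega^{-2\ell}$. The singular region can therefore be dropped, and only the upper tail $\{\omega^2>d/2+t\}$ needs a Hoeffding estimate. The paper simply takes $\psi=e^{ix_1}$ and applies Jensen, $\int_{Q_d}\omega^{-2\ell}\ge |Q_d|(d/2)^{-\ell}$, which gives $\mathcal{K}^\ell_{\text{opt}}(d)\le (d/2)^\ell$ directly.

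The lower bound is where the proposal breaks down, and already at $\ell=1$ your error accounting is off. With $\psi=\omega^{2\beta}v$ (your $\alpha=2\beta$), the identity gives $\int|\nabla\psi|^2\ge h_0(\beta)\int|\psi|^2\omega^{-2}+\beta\int|\psi|^2$ with $h_0(\beta)=-\beta^2-\beta(d-2)/2$. The constant error is \emph{not} lower order: for $\psi=e^{ix_1}$ the two terms on the right nearly cancel, since $\int_{Q_d}\omega^{-2}\approx 2|Q_d|/d$.
\begin{itemize}
\item The bound $\omega^2\le d$ does not help. It bounds $\int|\psi|^2$ from above only by $d\int|\psi|^2\omega^{-2}$, which makes the error twice the main term.
\item Poincar\'e gives $\mathcal{K}^1\ge h_0(\beta)/(1-\beta)$. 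For fixed $\beta$ this is only the fraction $|\beta|/(1+|\beta|)$ of $d/2$; for your $\alpha=-1$ it is $(d-3)/6$.
\item You must let $|\beta|\to\infty$ with $|\beta|=o(d)$. The optimum $\beta=1-\sqrt{d/2}$ gives $\frac d2(1-\sqrt{2/d})^2=\frac d2-\sqrt{2d}+1$, i.e.\ an $O(\sqrt d)$ loss, not $O(1)$.
\end{itemize}

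For $\ell\ge 2$ there is a genuine gap exactly at the step you flag, and the remedy you suggest cannot work. The weighted steps produce two error terms:
\begin{itemize}
\item $\beta\int|\phi|^2\omega^{-2k}$ in the weighted ground state inequality, coming from the $-\omega^2$ in $\Delta\omega^2=d/2-\omega^2$;
\item $-k\int|\nabla\psi|^2\omega^{-2k}$ in the weighted Bochner identity.
\end{itemize}
Pointwise, these are roughly $2\omega^2/d$ times the corresponding main terms $h_k(\beta)\int|\phi|^2\omega^{-2k-2}$ and $\frac{k(d-2k-3)}{2}\int|\nabla\psi|^2\omega^{-2k-2}$. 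Since $\omega^2\approx d/2$ on most of $Q_d$, they are of the \emph{same} order. Both also scale linearly in $|\beta|$ for $|\beta|\ll d$. So neither Cauchy--Schwarz with $|\nabla\omega|\le 1/2$, $|\Delta\omega^2|\le d$, nor any choice of $\beta$, makes them carry one fewer power of $d$. Controlling them genuinely requires the zero-average condition.

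The idea you are missing, which the paper uses, is never to compare these errors with the weighted main term. Instead, bound them by the \emph{already established lower-order unweighted} inequalities, applied to $\psi$ or $\psi_i$ (which have zero average), and then use the unweighted Poincar\'e inequality. For odd $m$:
\begin{align*}
\int_{Q_d}|\Delta^{m/2}\psi|^2 &= \sum_{i}\int_{Q_d}|\Delta^{(m-1)/2}\psi_i|^2 \ \ge\ \mathcal{K}^{m-1}(d)\int_{Q_d}|\nabla\psi|^2\omega^{-2(m-1)}\\
&\ge \mathcal{K}^{m-1}(d)\Big(h_{m-1}(\beta)\int_{Q_d}|\psi|^2\omega^{-2m}+\beta\int_{Q_d}|\psi|^2\omega^{-2(m-1)}\Big).
\end{align*}
The error is then absorbed via $-\beta\,\mathcal{K}^{m-1}(d)\int|\psi|^2\omega^{-2(m-1)}\le-\beta\int|\Delta^{(m-1)/2}\psi|^2\le -\beta\int|\Delta^{m/2}\psi|^2$. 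This gives $(1-\beta)\int|\Delta^{m/2}\psi|^2\ge \mathcal{K}^{m-1}(d)h_{m-1}(\beta)\int|\psi|^2\omega^{-2m}$, and $\beta\sim-\sqrt d$ yields the factor $\frac d2(1-O(d^{-1/2}))$.

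The even case is analogous. One applies level $m-2$ to $\Delta\psi$, then Bochner and two weighted first-order steps, absorbing the errors via levels $m-2$ and $m-1$ plus Poincar\'e. This also dictates the structure of the scheme: the unweighted induction hypothesis is applied on the outside, so only one weighted reduction per level is needed. Without this device, your recursion $\prod_k(d/2-O(1))$ is unjustified.
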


Similar to Theorem \ref{thm: discrete asymptotics}, we have explicit bounds on the torus constant, see \eqref{def: inductive constants (torus)} and Theorem \ref{thm: explict torus constant}. As we shall see later in Section \ref{sec: main result proof}, the limiting value of $2^{-\ell}$ corresponds to the function $\psi = e^{ix_1}$. This surprisingly suggests that for large dimensions the optimizers only have contributions coming from small frequencies. 

\subsection{Overview of the arguments}
The main task is to establish Theorem \ref{thm: torus asymptotics}, and the lower bound on the discrete constant $\mathcal{C}_\text{opt}^\ell(d)$ follows from it as follows: integration by parts along with H\"older's inequality gives
$$
\int_{Q_d} |\Delta^{\ell/2} \psi|^2 dx = (-1)^\ell \int_{Q_d} \overline{\psi} \Delta^\ell \psi dx \leq \left(\int_{Q_d} |\Delta^\ell \psi|^2 \omega^{2\ell} dx\right)^{1/2} \left(\int_{Q_d} |\psi|^2 \omega^{-2\ell} dx \right)^{1/2}.
$$
Invoking Hardy inequality on torus \eqref{torus Hardy} along with Lemma \ref{lem: discrete to continuous} proves
$$
\mathcal{C}_\text{opt}^\ell(d) \geq 4^\ell \mathcal{K}_\text{opt}^\ell(d).
$$
Finally applying Theorem \ref{thm: torus asymptotics} one derives
\begin{equation}\label{lower bound on discrete constant}
    \liminf_{d \rightarrow \infty}  \frac{\mathcal{C}_\text{opt}^\ell(d)}{d^\ell} \geq 2^\ell.
\end{equation}
To prove Theorem \ref{thm: discrete asymptotics}, it remains to construct suitable test functions matching the lower bound up to lower order terms in $d$. This will be done in Section \ref{sec: main result proof}.  

While the Hardy inequalities \eqref{torus Hardy} on torus are similar to the classical result \eqref{classical Hardy} and its higher order versions, there are two significant differences. First, instead of the classical inverse square weight $|x|^{-2}$, we are dealing with an equivalent weight $\omega^{-2}$. Since it lacks rotational invariances, its analysis leads to several non-desirable terms involving coupling between different variables. Second, and perhaps the most important one is the presence of ``global" zero average condition compared to ``local" Dirichlet boundary conditions. This makes it hard to extend the existing ideas as they rely heavily on the fact that the functions under consideration vanish on the boundary. 

Our approach uses two well-known ideas from the Dirichlet Hardy-type inequalities: ground state representation formula and Bochner identity. As expected, using them one cannot hope to determine the optimal constant in \eqref{torus Hardy}, as they depend crucially on the presence of the Dirichlet boundary. However, quite remarkably they turn out to be sufficient to capture the exact asymptotic behaviour of the constant. For $\ell = 1$, the result follows from the ground state representation along with the classical Poincar\'e inequality. The major difficulty lies in the case $\ell > 1$. Due to the anisotropic nature of $\omega$, one gets error terms in the weighted version of the first order Hardy inequality (corresponding to $\ell =1$) \eqref{torus Hardy}, making it difficult to prove the result for general $\ell$ by simple iteration. We show that by combining ground state representation formula and Bochner identity with an inductive scheme, one can bypass this difficulty, yielding a clean argument which only requires classical Poincar\'e inequality as an external input.

\begin{remark}[Comparison with independent work of Huang-Ye]\label{rem: comparison with Huang-Ye}
Huang and Ye independently obtained the same asymptotic formula for the optimal constants in the discrete Hardy inequalities, namely
$$
\lim_{d\to\infty}\frac{\mathcal{C}_{\mathrm{opt}}^{\ell}(d)}{d^\ell}=2^\ell.
$$

Although both approaches begin with a Fourier reduction of the lattice inequality to a weighted inequality on the flat torus, the subsequent arguments and intermediate results are substantially different. Our proof establishes torus Hardy inequalities with the singular weights $\omega^{-2\ell}$ by combining the ground state representation formula with a weighted integrated Bochner identity in an iterative scheme. This produces explicit, recursively defined constants for both the torus and lattice inequalities in every dimension $d>2\ell$. The scheme also provides a flexible way to control the error terms arising at successive stages, while avoiding the need for weighted Poincaré inequalities and relying only on the classical \emph{unweighted} Poincaré inequality.

Huang and Ye, by contrast, establish asymptotically sharp inequalities for a family of \emph{positive} powers of $\omega$, comparing weighted energies of different differential orders. These inequalities, however, do not yield Hardy inequalities involving negative powers of $\omega$, and in particular do not imply Theorem \ref{thm: torus asymptotics} and Theorem \ref{thm: explict torus constant} for general $\ell$. Their argument is based on first and second order Hardy identities and on a probabilistic interpretation of the normalized torus weight as the empirical mean of independent random variables. The weighted Poincar\'e inequalities arising in their proof are treated using concentration and entropy methods. Thus, while both approaches lead to the same discrete asymptotic formula, ours yields singular torus Hardy inequalities together with explicit recursive bounds, whereas theirs establishes a family of torus inequalities involving positive powers of the weight $\omega$ and elucidates the underlying high- dimensional concentration phenomenon. The two methods therefore rely on different principal ingredients and produce complementary intermediate results of independent interest.
\end{remark}
The rest of the article is organized as follows. In Section \ref{sec: key estimates}, we prove two key estimates: a ground state representation formula and a weighted integrated Bochner identity, and then specialize them to our setting. In Section \ref{sec: explicit Hardy constants}, we combine these estimates with an iterative scheme to prove the Hardy inequalities \eqref{torus Hardy} and \eqref{def: Hardy constant}, with explicit constants defined inductively in the parameter $\ell$. Finally, in Section \ref{sec: main result proof}, we use these explicit estimates to prove our main results, Theorem \ref{thm: torus asymptotics} and Theorem \ref{thm: discrete asymptotics}.  

\section{Key tools}\label{sec: key estimates}
\subsection{Preliminary identities}
In this section we prove two identities, which will play an important role in our analysis. The first one is the well-known ground state representation formula from spectral theory of differential operators \cite{frank-groundstate, huang-Ye1}. 

\begin{lemma}
Let $V \in C_\text{per}^\infty(\overline{Q_d})$ be real-valued and let $g \in C_\text{per}^\infty(\overline{Q_d})$ be a positive function. Then we have 
\begin{equation}\label{ground state representation}
    \int_{Q_d} V(x) |\nabla \psi|^2 dx = \int_{Q_d} \frac{-\textnormal{div}(V \nabla g)}{g} |\psi|^2 dx + \int_{Q_d} V g^2 \left|\nabla\left(\frac{\psi}{g}\right)\right|^2 dx,  
\end{equation}
for all functions $\psi \in  C_\text{per}^\infty(\overline{Q_d})$.
\end{lemma}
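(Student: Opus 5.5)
We prove the ground state representation \eqref{ground state representation} by a direct pointwise computation followed by one integration by parts on the torus. The absence of boundary terms comes from periodicity of all functions involved. Since $g>0$ is smooth and periodic, the quotient $\phi:=\psi/g$ lies in $C_\text{per}^\infty(\overline{Q_d})$, and we write $\psi = g\phi$.

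First expand the gradient. Pointwise,
\begin{equation*}
|\nabla \psi|^2 = |\phi\nabla g + g\nabla\phi|^2 = |\phi|^2|\nabla g|^2 + g^2|\nabla \phi|^2 + 2g\,\textnormal{Re}\big(\overline{\phi}\,\nabla g\cdot\nabla\phi\big).
\end{equation*}
Multiply by $V$ and integrate. The term $V g^2|\nabla\phi|^2$ is exactly the last term on the right of \eqref{ground state representation}, so it remains to treat the other two terms.

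For the cross term, use $2\textnormal{Re}(\overline{\phi}\nabla\phi) = \nabla|\phi|^2$, which rewrites it as $\int V g\,\nabla g\cdot\nabla|\phi|^2$. Integrate by parts on $Q_d$. The boundary contributions on opposite faces cancel, since $V$, $g$, $\nabla g$ and $|\phi|^2$ are all $2\pi$-periodic in each variable. This gives
\begin{equation*}
-\int_{Q_d}\textnormal{div}(Vg\nabla g)\,|\phi|^2\,dx = -\int_{Q_d}\big(g\,\textnormal{div}(V\nabla g) + V|\nabla g|^2\big)|\phi|^2\,dx .
\end{equation*}
The piece $-\int V|\nabla g|^2|\phi|^2$ cancels the first term $\int V|\phi|^2|\nabla g|^2$ from the expansion. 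What remains is $\int -g\,\textnormal{div}(V\nabla g)|\phi|^2$, and substituting $|\phi|^2 = |\psi|^2/g^2$ turns it into $\int \frac{-\textnormal{div}(V\nabla g)}{g}|\psi|^2$, which is the first term on the right of \eqref{ground state representation}.

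No step is a genuine obstacle. The only care needed is to justify that the boundary terms vanish, which follows from periodicity together with the smoothness of $\phi$ guaranteed by $g>0$. One should also handle complex-valued $\psi$ correctly, which the identity $2\textnormal{Re}(\overline{\phi}\nabla\phi)=\nabla|\phi|^2$ takes care of.
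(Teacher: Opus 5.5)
Your proof is correct and takes essentially the same route as the paper: a pointwise expansion of the gradient followed by one integration by parts, with boundary terms cancelling by periodicity. The only cosmetic difference is that you expand $|\nabla(g\phi)|^2$ in terms of $\phi=\psi/g$, whereas the paper expands $Vg^2|\nabla(\psi/g)|^2$ directly in terms of $\psi$.
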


\begin{remark}
Traditionally the identity is stated for functions $\psi$ having compact support and $g$ a positive smooth function in a domain $\Omega$. The proof carries over verbatim to our setting, under an additional periodicity assumption on the function $g$. This additional assumption arises from the absence of Dirichlet boundary conditions for the test functions $\psi$.    
\end{remark}

\begin{proof}
Expanding the integrand in the last term in \eqref{ground state representation}, along with chain rule gives
$$
V g^2 \left|\nabla\left(\frac{\psi}{g}\right)\right|^2 = V |\nabla \psi|^2 + V \frac{|\nabla g|^2}{|g|^2} |\psi|^2 - V \frac{\nabla g}{g} \cdot \nabla |\psi|^2.
$$
Applying integration by parts, along with the periodicity assumptions on $V, g, \psi$ and their derivatives proves the desired identity.
\end{proof}

The second one is a weighted integrated Bochner identity. We introduce the following handy notation:
$$
\psi_i := \partial_{x_i} \psi, \hspace{5pt} \text{and} \hspace{5pt} \psi_{ij} := \partial_{x_j}\partial_{x_i} \psi.
$$

\begin{lemma}
Let $V \in C_\text{per}^\infty(\overline{Q_d})$ be real-valued. Then for $\psi \in  C_\text{per}^\infty(\overline{Q_d})$ the following holds
\begin{equation}\label{integrated Bochner identity}
    \int_{Q_d} V |\Delta \psi|^2 dx = \sum_{i=1}^d \int_{Q_d} V |\nabla \psi_i|^2 + \sum_{1 \leq i, j \leq d} \int_{Q_d} \textnormal{Re}(\psi_i \overline{\psi_j}) V_{ij} dx - \int_{Q_d} \Delta V |\nabla \psi|^2 dx.
\end{equation}
\end{lemma}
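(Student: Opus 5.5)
The identity \eqref{integrated Bochner identity} follows from two integrations by parts. All boundary terms vanish because $V$, $\psi$ and their derivatives are periodic. We work with the sesquilinear form and take real parts at the end; the left-hand side is real, so we may write $\int V|\Delta\psi|^2 = \mathrm{Re}\int V \Delta\psi\,\overline{\Delta\psi}$.

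\emph{Step 1.} Write $\Delta\psi = \sum_i \psi_{ii}$ and move one $\partial_{x_i}$ off $\psi_{ii}$:
\[
\int V \psi_{ii}\overline{\Delta\psi} = -\int V_i\psi_i\overline{\Delta\psi} - \int V\psi_i \overline{\partial_i\Delta\psi}.
\]
In the last term, expand $\partial_i\Delta\psi=\sum_j \psi_{ijj}$ and integrate by parts in $x_j$:
\[
-\int V\psi_i\overline{\psi_{ijj}} = \int V\psi_{ij}\overline{\psi_{ij}} + \int V_j \psi_i\overline{\psi_{ij}}.
\]
Summing over $i,j$ gives
\[
\int V|\Delta\psi|^2 = \sum_i\int V|\nabla\psi_i|^2 - \sum_{i,j}\int V_i\psi_i\overline{\psi_{jj}} + \sum_{i,j}\int V_j\psi_i\overline{\psi_{ij}}.
\]

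\emph{Step 2.} Handle the two cross terms after taking real parts.

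For the third term, $\mathrm{Re}(\psi_i\overline{\psi_{ij}}) = \tfrac12\partial_j|\psi_i|^2$. Integrating by parts in $x_j$ and summing gives $-\tfrac12\int\Delta V|\nabla\psi|^2$.

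For the second term, integrate by parts in $x_j$:
\[
-\int V_i\psi_i\overline{\psi_{jj}} = \int V_{ij}\psi_i\overline{\psi_j} + \int V_i\psi_{ij}\overline{\psi_j}.
\]
Take real parts and sum over $i,j$. In $\sum_{i,j}\mathrm{Re}\int V_i \psi_{ij}\overline{\psi_j}$, note that $\psi_{ij}=\psi_{ji}$, so $\sum_j \mathrm{Re}(\psi_{ji}\overline{\psi_j})=\tfrac12\partial_i|\nabla\psi|^2$. Integrating by parts in $x_i$ then gives $-\tfrac12\int\Delta V|\nabla\psi|^2$.

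\emph{Step 3.} Collect the terms. The two half-contributions combine to $-\int\Delta V|\nabla\psi|^2$. The term $\sum_{i,j}\int V_{ij}\,\mathrm{Re}(\psi_i\overline{\psi_j})$ remains. This is exactly \eqref{integrated Bochner identity}.

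The only delicate point is bookkeeping: one must take real parts consistently, and use the symmetry $\psi_{ij}=\psi_{ji}$ to recognise the two $\tfrac12\partial|\cdot|^2$ structures. No analytic difficulty arises, since all functions are smooth and periodic.
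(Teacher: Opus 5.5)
Your proof is correct and is essentially the paper's argument. The paper multiplies the pointwise Bochner identity $\tfrac12\Delta|\nabla\psi|^2=\sum_i|\nabla\psi_i|^2+\mathrm{Re}(\nabla\overline{\psi}\cdot\nabla\Delta\psi)$ by $V$ and integrates, which is what your treatment of $-\int V\psi_i\overline{\psi_{ijj}}$ via $\mathrm{Re}(\psi_i\overline{\psi_{ij}})=\tfrac12\partial_j|\psi_i|^2$ re-derives by hand; it then handles the cross term $-\mathrm{Re}\int(\nabla V\cdot\nabla\overline{\psi})\Delta\psi$ by the same integration by parts and the same $\tfrac12\partial_i|\nabla\psi|^2$ observation, so the only difference is that your version is self-contained rather than citing the Bochner formula.
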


\begin{proof}
We begin with the well-known Bochner identity, see for instance \cite[Chapter 9]{bochner} 
\begin{equation}\label{Bochner identity}
    \frac{1}{2} \Delta |\nabla \psi|^2 = \sum_i |\nabla \psi_i|^2 + \textnormal{Re}\left(\nabla \overline{\psi} \cdot \nabla (\Delta \psi)\right).
\end{equation}
Integration by parts and the product rule for derivatives gives
$$
\textnormal{Re} \int_{Q_d} V \nabla \overline{\psi} \cdot \nabla(\Delta \psi) dx = - \int_{Q_d} V |\Delta \psi|^2 dx - \textnormal{Re}\int_{Q_d} (\nabla V \cdot \nabla \overline{\psi}) \Delta \psi dx.  
$$
Using this in the Bochner identity \eqref{Bochner identity} yields
$$
\int_{Q_d} V |\Delta \psi|^2 dx = \sum_i\int_{Q_d} V |\nabla \psi_i|^2 -  \textnormal{Re}\int_{Q_d} (\nabla V \cdot \nabla \overline{\psi}) \Delta \psi dx - \frac{1}{2} \int_{Q_d} \Delta V |\nabla \psi|^2 dx.
$$
An application of integration by parts, together with the product and chain rules, the middle term on the right-hand side can be rewritten as
$$
-\textnormal{Re}\int_{Q_d} (\nabla V \cdot \nabla \overline{\psi}) \Delta \psi dx = \sum_{1 \leq i, j \leq d} \int_{Q_d} \textnormal{Re}(\psi_i \overline{\psi_j}) V_{ij} dx -\frac{1}{2}\int_{Q_d} \Delta V |\nabla \psi|^2 dx.
$$
This completes the proof.
\end{proof}

\begin{remark}
As a side remark, we mention that combining the previous two lemmas yields a Hardy–Rellich identity in the same spirit as the ground state representation formula \eqref{ground state representation}. 
\end{remark}

\subsection{(Almost) weighted Hardy inequalities on torus}
We apply the identities obtained in the previous subsection to derive almost weighted Hardy inequalities of first and second order on the torus. The resulting inequalities contain error terms, which arise from the anisotropic nature of the weight and from the use of tools that are better suited to Dirichlet boundary conditions. 

We define the following constants:
$$
h_k(\beta) := -\beta^2 - \beta(d-2k-2)/2, \hspace{5pt} \text{and} \hspace{5pt} r_k(\beta) = h_k(\beta) + k(d-2k-3)/2,
$$
Henceforth we assume that $k$ is a non-negative integer and $\beta$ is a negative real number.

\begin{lemma}[Weighted Hardy inequality of order one]\label{lem: weight Hardy of order one}
For $d > 2k+2$ the following estimate holds
\begin{equation}\label{weighted Hardy inequality}
    \int_{Q_d} |\nabla \psi|^2 \omega^{-2k} dx -  \beta \int_{Q_d}|\psi|^2 \omega^{-2k} dx \geq h_k(\beta) \int_{Q_d} |\psi|^2 \omega^{-2k-2} dx ,
\end{equation}
for all $\psi \in  C_{\text{per}}^\infty(\overline{Q_d})$.
\end{lemma}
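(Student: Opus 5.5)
The plan is to apply the ground state representation \eqref{ground state representation} with the weight $V=\omega^{-2k}$ and the trial ground state $g=\omega^{2\beta}$, discard the last (nonnegative) term, and bound the resulting potential from below. Writing $s:=\omega^2=\sum_j \sin^2(x_j/2)=\tfrac12\sum_j(1-\cos x_j)$, the basic identities are
$$
\partial_j s=\tfrac12\sin x_j,\qquad \Delta s=\tfrac12\sum_j\cos x_j=\tfrac d2-s,\qquad |\nabla s|^2=\sum_j \sin^2\tfrac{x_j}{2}\cos^2\tfrac{x_j}{2}=s-\sum_j\sin^4\tfrac{x_j}{2}\le s .
$$
With $V=s^{-k}$ and $g=s^{\beta}$ one has $V\nabla g=\beta s^{\beta-k-1}\nabla s$. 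A direct computation then gives
$$
\frac{-\operatorname{div}(V\nabla g)}{g}=-\beta(\beta-k-1)\,s^{-k-2}|\nabla s|^2-\beta\tfrac d2\, s^{-k-1}+\beta s^{-k}.
$$
Since $\beta<0$, the coefficient $-\beta(\beta-k-1)=-\beta^2+\beta(k+1)$ is negative. The only place an inequality enters is replacing $|\nabla s|^2$ by its upper bound $s$ in this term. This yields the lower bound
$$
\frac{-\operatorname{div}(V\nabla g)}{g}\ \ge\ \Big(-\beta^2-\beta\,\tfrac{d-2k-2}{2}\Big)s^{-k-1}+\beta s^{-k}=h_k(\beta)\,\omega^{-2k-2}+\beta\,\omega^{-2k}.
$$
Inserting this into \eqref{ground state representation} and moving the $\beta\int|\psi|^2\omega^{-2k}$ term to the left gives exactly \eqref{weighted Hardy inequality}.

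The obstacle is that $V$ and $g$ are singular at $x=0$, the only zero of $\omega$ in $\overline{Q_d}$, so \eqref{ground state representation} cannot be applied directly. I would regularize with $s_\epsilon:=s+\epsilon$, $V_\epsilon=s_\epsilon^{-k}$ and $g_\epsilon=s_\epsilon^{\beta}$, which are smooth, periodic and positive. The same computation holds with $\nabla s_\epsilon=\nabla s$ and $|\nabla s_\epsilon|^2\le s\le s_\epsilon$. The only change is $\Delta s_\epsilon=\tfrac d2-s_\epsilon+\epsilon$, which produces an extra term $\beta\epsilon\,s_\epsilon^{-k-1}$ (after the division by $g_\epsilon$). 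This term is bounded by $|\beta|\, s_\epsilon^{-k}$ and tends to $0$ pointwise away from the origin.

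We thus obtain \eqref{weighted Hardy inequality} with $s_\epsilon$ in place of $\omega^2$, up to an error $|\beta|\epsilon\int|\psi|^2 s_\epsilon^{-k-1}$. We then let $\epsilon\to0$:
\begin{itemize}
\item the left-hand integrals converge by dominated convergence, since $\omega^{-2k}$ is integrable;
\item the right-hand integral converges by monotone convergence;
\item the error term tends to $0$ by dominated convergence, since $\epsilon s_\epsilon^{-k-1}\le s^{-k}$.
\end{itemize}
Here the hypothesis $d>2k+2$ is what makes $\omega^{-2k-2}\sim|x|^{-2k-2}$ integrable near $0$, so all quantities are finite and the limit inequality is meaningful.
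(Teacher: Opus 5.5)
Your proof is correct and follows the paper's argument: the same choice $V=\omega^{-2k}$, $g=\omega^{2\beta}$ in the ground state representation, regularized at the origin, and the same pointwise loss $|\nabla \omega^2|^2=\omega^2-\sum_j\sin^4(x_j/2)\le\omega^2$. The paper drops the $\sum_j\sin^4(x_j/2)$ term after letting $\epsilon\to0$ rather than before. One small remark: your extra regularization term is in fact $-\beta\epsilon\, s_\epsilon^{-k-1}\ge 0$, so it could simply be discarded, though your dominated-convergence treatment is valid for either sign.
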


\begin{proof}
Let $\epsilon$ be a non-zero real number and $\omega_\epsilon^2 := \omega^2 + \epsilon^2$. Note that this regularization appears since the weights in \eqref{weighted Hardy inequality} are singular at origin. Taking $V = \omega_\epsilon^{-2k}$ and $g = \omega_\epsilon^{2\beta}$ in the ground state representation \eqref{ground state representation} gives
$$
\int_{Q_d} |\nabla \psi|^2 \omega_{\epsilon}^{-2k} dx \geq \int_{Q_d} \frac{-\text{div} (w_\epsilon^{-2k} \nabla g)}{g}|\psi|^2 dx.
$$
Using basic calculus rules and half angle formulas for $\sin x$ and $\cos x$ we get
\begin{align*}
    \frac{-\text{div}(w_\epsilon^{-2k} \nabla g )}{g} = &- \frac{\beta d}{2} \omega_\epsilon^{-2k-2} - \beta (\beta-k-1) \omega^2 \omega_\epsilon^{-2k-4} \\
    &+ \beta \omega^2 \omega_\epsilon^{-2k-2} + \beta(\beta-k-1)\omega_\epsilon^{-2k} \left(\frac{\sum_j \sin^4(x_j/2)}{\omega_\epsilon^4}\right).    
\end{align*}
Taking limit $\epsilon \rightarrow 0$ and noting that all the singularities are integrable for $d > 2k+2$, invoking dominating convergence theorem proves
\begin{align*}
    \int_{Q_d} |\nabla \psi|^2 \omega^{-2k} dx  \geq &h_k(\beta)  \int_{Q_d} |\psi|^2 \omega^{-2k-2} dx + \beta  \int_{Q_d} |\psi|^2 \omega^{-2k} dx \\
    & + \beta(\beta-k-1)  \int_{Q_d} |\psi|^2 \omega^{-2k}  \left(\frac{\sum_j \sin^4(x_j/2)}{\omega^4}\right) dx.
\end{align*}
Noting that $\beta < 0$ makes the last term non-negative, the result follows. 
\end{proof}

\begin{remark}
In the last line of the proof, we drop the final term by simply bounding it from below by zero. At first sight, this may appear to be a rather wasteful estimate. Indeed, one can obtain a sharper bound for this term by using Hölder's inequality, which gives
$$
\frac{\sum_j \sin^4(x_j/2)}{\omega^4} \geq \frac{1}{d}.
$$
This would lead to improved constants in the final estimates. However, retaining this term in \eqref{weighted Hardy inequality} would make the subsequent analysis significantly more cumbersome. Moreover, it turns out that this contribution does not affect the leading order asymptotics of the optimal constant $\mathcal{K}_{\mathrm{opt}}^\ell(d)$. We therefore discard it in favour of a cleaner proof.
\end{remark}

The next lemma is an application of integrated Bochner identity \eqref{integrated Bochner identity} and would lead to the weighted Hardy inequality of order two.
\begin{lemma}
For $d > 2k + 2$ we have
\begin{equation}\label{splitting Laplacian}
    \begin{split}
        \int_{Q_d} |\Delta \psi|^2 \omega^{-2k} dx \geq \sum_{i} \int_{Q_d} |\nabla \psi_i|^2 \omega^{-2k} dx &+ \frac{k(d-2k-3)}{2} \int_{Q_d} |\nabla \psi|^2 \omega^{-2k-2} dx\\
        &- k \int_{Q_d} |\nabla \psi|^2 \omega^{-2k} dx,    
    \end{split}
\end{equation}
for all $\psi \in  C_{\text{per}}^\infty(\overline{Q_d})$.  
\end{lemma}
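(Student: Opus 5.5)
The plan is to apply the integrated Bochner identity \eqref{integrated Bochner identity} with a regularized weight $V=\omega_\epsilon^{-2k}$, where $\omega_\epsilon^2=\omega^2+\epsilon^2$ as in the proof of Lemma \ref{lem: weight Hardy of order one}. The Hessian $V_{ij}$ and $\Delta V$ are explicit. I will bound the cross term $\sum_{i,j}\textnormal{Re}(\psi_i\overline{\psi_j})V_{ij}$ and the term $-\Delta V|\nabla\psi|^2$ from below pointwise, and then let $\epsilon\to 0$.

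\textbf{Derivatives of the weight.} Write $s_i:=\sin(x_i/2)$. Then
\[
\partial_i\omega^2=\tfrac12\sin x_i,\qquad \partial_{ii}\omega^2=\tfrac12\cos x_i=\tfrac12(1-2s_i^2),
\]
so
\[
V_{ij}=\tfrac{k(k+1)}{4}\,\omega_\epsilon^{-2k-4}\sin x_i\sin x_j-\tfrac{k}{2}\,\delta_{ij}\,\omega_\epsilon^{-2k-2}\cos x_i .
\]

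\textbf{Cross term.} Substituting $V_{ij}$ gives
\[
\sum_{i,j}\textnormal{Re}(\psi_i\overline{\psi_j})V_{ij}=\tfrac{k(k+1)}{4}\,\omega_\epsilon^{-2k-4}\Big|\sum_i\sin x_i\,\psi_i\Big|^2-\tfrac k2\,\omega_\epsilon^{-2k-2}\sum_i(1-2s_i^2)|\psi_i|^2 .
\]
The first term is nonnegative and the $s_i^2$ contribution has a good sign. Dropping both, the cross term is bounded below by $-\tfrac k2\,\omega_\epsilon^{-2k-2}|\nabla\psi|^2$.

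\textbf{Laplacian term.} Here
\[
\Delta V=\tfrac{k(k+1)}{4}\,\omega_\epsilon^{-2k-4}\sum_i\sin^2x_i-\tfrac k2\,\omega_\epsilon^{-2k-2}(d-2\omega^2).
\]
Since $\sum_i\sin^2 x_i=4\omega^2-4\sum_i s_i^4\le 4\omega^2\le 4\omega_\epsilon^2$, we get
\[
-\Delta V\ge \tfrac{k(d-2k-2)}{2}\,\omega_\epsilon^{-2k-2}-k\,\omega^2\omega_\epsilon^{-2k-2}\ge \tfrac{k(d-2k-2)}{2}\,\omega_\epsilon^{-2k-2}-k\,\omega_\epsilon^{-2k}.
\]

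\textbf{Combining.} Adding the two lower bounds gives
\[
\sum_{i,j}\textnormal{Re}(\psi_i\overline{\psi_j})V_{ij}-\Delta V|\nabla\psi|^2\ \ge\ \tfrac{k(d-2k-3)}{2}\,\omega_\epsilon^{-2k-2}|\nabla\psi|^2-k\,\omega_\epsilon^{-2k}|\nabla\psi|^2 .
\]
Inserting this into \eqref{integrated Bochner identity} yields \eqref{splitting Laplacian} with $\omega_\epsilon$ in place of $\omega$.

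\textbf{Limit $\epsilon\to0$.} This is the only delicate point. All integrands are bounded by constant multiples of $\omega^{-2k-2}$ times smooth functions. For $d>2k+2$ the weight $\omega^{-2k-2}\sim|x|^{-2k-2}$ is integrable near the origin, and $\omega$ vanishes only at $x=0$ in $\overline{Q_d}$ (modulo periodicity). Hence dominated convergence passes every term to the limit.

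The main obstacle is the bookkeeping of signs when discarding terms. Each dropped piece must be checked to be nonnegative even at the $\epsilon$-level: these are the $|\sum_i\sin x_i\,\psi_i|^2$ term, the $s_i^2|\psi_i|^2$ term, and the $\sum_i s_i^4$ term. Those that are not must be replaced using $\omega^2\le\omega_\epsilon^2$; it is exactly this bookkeeping that produces the constant $d-2k-3$ rather than $d-2k-2$.
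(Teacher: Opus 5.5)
Your proof is correct and is essentially the paper's argument: insert the regularized weight $V=\omega_\epsilon^{-2k}$ into the integrated Bochner identity, discard the nonnegative pieces $\frac{k(k+1)}{4}\omega_\epsilon^{-2k-4}|\sum_i \sin x_i\,\psi_i|^2$ and $k(k+1)\omega_\epsilon^{-2k-4}\sum_i \sin^4(x_i/2)\,|\nabla\psi|^2$, bound $\cos x_i\le 1$ in the diagonal Hessian term, and pass to the limit $\epsilon\to0$ by dominated convergence (integrability of $\omega^{-2k-2}$ for $d>2k+2$). The differences are cosmetic: you drop terms at the $\epsilon$-level via $\omega^2\le\omega_\epsilon^2$, while the paper drops them after the limit; also, the extra $-1$ in $d-2k-3$ comes from the $-\frac{k}{2}$ lower bound on the cross term, not from the $\omega^2\le\omega_\epsilon^2$ replacements as your closing remark suggests.
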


\begin{proof}
Let $\epsilon$ be non-zero real number, we regularize as before and set $\omega_\epsilon^2 = \omega^2 + \epsilon^2$. Let $V = \omega_\epsilon^{-2k}$,  then its derivatives are given by 
$$
V_{ij} = -\frac{k}{2} \omega_\epsilon^{-2k-2} \cos x_j \delta_{ij} + \frac{k(k+1)}{4} \sin x_i \sin x_j \omega_\epsilon^{-2k-4}, 
$$
where $\delta_{ij}$ is the Kronecker delta. In particular the Laplacian of $V$ becomes
$$
-\Delta V = \frac{k d}{2} \omega_\epsilon^{-2k-2} -k(k+1) \omega^2 \omega_\epsilon^{-2k-4}- k \omega^2 \omega_\epsilon^{-2k-2} + k(k+1) \omega_\epsilon^{-2k} \left(\frac{\sum_j \sin^4(x_j/2)}{\omega_\epsilon^4}\right). 
$$
Using these formulae in the integrated Bochner identity \eqref{integrated Bochner identity}, taking limit $\epsilon \rightarrow 0$, and using non-negativity of
$$
\frac{k(k+1)}{4} \omega^{-2k-4}\left|\sum_j \sin x_j \psi_j \right|^2 \geq 0, \hspace{5pt} \text{and} \hspace{5pt} k(k+1)\omega^{-2k-4} \sum_j \sin^4(x_j/2)|\nabla \psi|^2 \geq 0,
$$
gives
$$
\sum_{1 \leq i, j \leq d} \int_{Q_d} \textnormal{Re}(\psi_i \overline{\psi_j}) V_{ij} dx \geq - \frac{k}{2} \int_{Q_d} \omega^{-2k-2}|\nabla \psi|^2 dx,
$$
and 
$$
- \int_{Q_d} \Delta V |\nabla \psi|^2 dx \geq \frac{k(d-2k-2)}{2} \int_{Q_d}\omega^{-2k-2} dx - k \int_{Q_d}\omega^{-2k} dx,
$$
for $d > 2k + 2$. Combining these estimate gives the desired result. 
\end{proof}
We combine the obtained estimate with almost weighted Hardy inequality \eqref{weighted Hardy inequality} to obtain weighted Hardy inequality of order two.
\begin{lemma}[Weighted Hardy inequality of order two]\label{lem: weight Hardy of order two}
Let $d > 2k+2$, then the following holds
\begin{equation}\label{weighted second order Hardy inequality}
    \int_{Q_d} |\Delta \psi|^2 \omega^{-2k} dx -  (\beta - k) \int_{Q_d}|\nabla \psi|^2 \omega^{-2k} dx \geq r_k(\beta) \int_{Q_d} |\nabla \psi|^2 \omega^{-2k-2} dx,
\end{equation}
for all $\psi \in  C_{\text{per}}^\infty(\overline{Q_d})$.
\end{lemma}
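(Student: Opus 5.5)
The plan is to start from the splitting estimate \eqref{splitting Laplacian} and apply the first-order weighted Hardy inequality \eqref{weighted Hardy inequality} to each partial derivative $\psi_i$. Since $\psi \in C_\text{per}^\infty(\overline{Q_d})$, every $\psi_i$ is again in $C_\text{per}^\infty(\overline{Q_d})$. Lemma \ref{lem: weight Hardy of order one}, with the same $k$ and $\beta<0$ and valid for $d>2k+2$, therefore gives for each $i$
$$
\int_{Q_d} |\nabla \psi_i|^2 \omega^{-2k} dx \geq \beta \int_{Q_d}|\psi_i|^2 \omega^{-2k} dx + h_k(\beta) \int_{Q_d} |\psi_i|^2 \omega^{-2k-2} dx .
$$
Summing over $i=1,\dots,d$ and using $\sum_i |\psi_i|^2 = |\nabla \psi|^2$, we obtain a lower bound for the first term on the right of \eqref{splitting Laplacian}:
$$
\sum_i \int_{Q_d} |\nabla \psi_i|^2 \omega^{-2k} dx \geq \beta \int_{Q_d}|\nabla\psi|^2 \omega^{-2k} dx + h_k(\beta) \int_{Q_d} |\nabla\psi|^2 \omega^{-2k-2} dx .
$$

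Next I substitute this into \eqref{splitting Laplacian}, which holds under the same hypothesis $d>2k+2$. The terms with weight $\omega^{-2k}$ combine to $(\beta - k)\int_{Q_d} |\nabla\psi|^2\omega^{-2k}dx$. The terms with weight $\omega^{-2k-2}$ combine to $\bigl(h_k(\beta) + k(d-2k-3)/2\bigr)\int_{Q_d} |\nabla\psi|^2\omega^{-2k-2}dx$, and the coefficient is exactly $r_k(\beta)$. Moving the $(\beta-k)$ term to the left-hand side then gives \eqref{weighted second order Hardy inequality}.

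There is no real obstacle here; the argument is bookkeeping. The only points to check are the following:
\begin{itemize}
\item the applicability of Lemma \ref{lem: weight Hardy of order one} to $\psi_i$, which follows from periodicity;
\item the finiteness of all integrals, which holds because the singularity of $\omega^{-2k-2}$ at the origin is integrable for $d>2k+2$;
\item the coefficients, which match the definitions of $h_k$ and $r_k$.
\end{itemize}
Note that the $\omega^{-2k-2}$ coefficient in the statement of \eqref{splitting Laplacian} is $k(d-2k-3)/2$. This already accounts for the $-k/2$ contribution from the Hessian term of the Bochner identity, so no further adjustment is needed.
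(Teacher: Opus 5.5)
Your proposal is correct and is essentially the paper's proof. You apply the first-order weighted Hardy inequality to each $\psi_i$, sum over $i$ using $\sum_i|\psi_i|^2=|\nabla\psi|^2$, and substitute into the splitting estimate, where the coefficients combine to $\beta-k$ and $h_k(\beta)+k(d-2k-3)/2=r_k(\beta)$. Your coefficient bookkeeping and the integrability condition $d>2k+2$ are handled correctly.
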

\begin{proof}
Lemma \ref{lem: weight Hardy of order one} applied to $\psi_i$ gives
$$
\int_{Q_d} |\nabla \psi_i|^2 \omega^{-2k} dx \geq \beta \int_{Q_d}|\psi_i|^2 \omega^{-2k} dx + h_k(\beta) \int_{Q_d} |\psi_i|^2 \omega^{-2k-2} dx ,
$$
Summing with respect to $i$ and using \eqref{splitting Laplacian} proves \eqref{weighted second order Hardy inequality}
\end{proof}

\section{Hardy inequalities with explicit constants}\label{sec: explicit Hardy constants}
\subsection{Hardy inequalities on the torus}
Our goal here is to obtain Hardy inequalities \eqref{torus Hardy} for $\Delta^{\ell/2}$ with explicit constants. Due to the error terms appearing on the right-hand sides of Lemma \ref{lem: weight Hardy of order one} and Lemma \ref{lem: weight Hardy of order two}, these estimates cannot be iterated directly to obtain \eqref{torus Hardy}. It turns out that one can set up an involved iterative scheme, yielding the Hardy inequalities, taking \emph{only} classical Poincar\'e inequality as an external input. In particular, this approach bypasses the use of weighted Poincar\'e inequalities. 

Let $\ell$ be a non-negative integer, we define the constants $\mathcal{K}^\ell(d)$ inductively with respect to $\ell$ as follows:
\begin{equation}\label{def: inductive constants (torus)}
    \begin{split}
        &\mathcal{K}^0(d) = 1, \hspace{9pt} \mathcal{K}^1(d) = \frac{d}{2} \left(1 - \frac{\sqrt{2}}{\sqrt{d}}\right)^2,\\
         \text{for $\ell \geq 2$ even}, \hspace{9pt} &\mathcal{K}^\ell(d) = \sup_{(\alpha, \beta) \in \mathcal{A}_d^\ell }\frac{r_{\ell-2}(\alpha) h_{\ell-1}(\beta)}{\frac{-\alpha + \ell -1 }{\mathcal{K}^{\ell-2}(d)} - \beta \frac{r_{\ell-2}(\alpha)}{ \mathcal{K}^{\ell-1}(d)}},\\
         \text{for $\ell \geq 2$ odd}, \hspace{9pt} & \mathcal{K}^\ell(d) = \mathcal{K}^{\ell-1}(d) \sup_{(\alpha, \beta) \in \mathcal{A}_d^\ell} \frac{h_{\ell-1}(\beta)}{1-\beta},
    \end{split}  
\end{equation}
where the parameter set is given by
$$
\mathcal{A}_d^\ell := \Big\{\alpha, \beta <0 : r_{\ell-2}(\alpha) > 0 \hspace{5pt} \text{and} \hspace{5pt} h_{\ell-1}
(\beta) > 0 \Big\}.
$$
Note that for $d > 2\ell$, the set $\mathcal{A}_d^\ell$ is non-empty and the constants $\mathcal{K}^\ell(d)$ are strictly positive and finite. The finiteness follows from the boundedness of the space $\mathcal{A}_d^\ell$.
\begin{theorem}\label{thm: explict torus constant}
Let $\ell \in \N$ and $\mathcal{K}^\ell(d)$ be the constant defined by \eqref{def: inductive constants (torus)}. Let $d > 2\ell$, then the following Hardy inequalities hold 
\begin{equation}\label{explicit constant (torus)}
    \int_{Q_d} |\Delta^{\ell/2} \psi(x)|^2 dx \geq \mathcal{K}^\ell(d) \int_{Q_d} \frac{|\psi(x)|^2}{\omega(x)^{2\ell}} dx,
\end{equation}
for all functions $\psi \in C_\text{per}^\infty(\overline{Q_d})$ having zero average.
\end{theorem}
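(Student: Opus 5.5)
We argue by strong induction on $\ell$: at each order we apply Lemma \ref{lem: weight Hardy of order one} with $k=\ell-1$ to $\psi$ itself, and control the resulting lower-order weighted terms by the inequalities already proved at orders $\ell-1$ and $\ell-2$. Throughout, write $E_\ell(\psi):=\int_{Q_d}|\Delta^{\ell/2}\psi|^2dx$.

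\textbf{Preliminary facts.} Three facts are used repeatedly.
\begin{itemize}
\item[(i)] By Fourier series, for zero-average $\psi$ the classical Poincar\'e inequality gives $E_{j}(\psi)\le E_{j+1}(\psi)$, since every frequency satisfies $|\xi|\ge1$.
\item[(ii)] Integration by parts gives $\sum_i E_{j}(\psi_i)=E_{j+1}(\psi)$ and $E_j(\Delta\psi)=E_{j+2}(\psi)$. For odd $j$ this uses $\sum_i|\nabla \Delta^m\psi_i|^2$ integrating to $\int|\Delta^{m+1}\psi|^2$.
\item[(iii)] Derivatives $\psi_i$ and $\Delta\psi$ of a periodic function automatically have zero average, so the induction hypothesis applies to them.
\end{itemize}
Since $d>2\ell$ implies $d>2(\ell-1)$ and $d>2(\ell-2)$, the hypotheses at lower orders are available. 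Every use of the weighted lemmas below has $k\le \ell-1$, so $d>2k+2$ holds.

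\textbf{Base case $\ell=1$.} Apply \eqref{weighted Hardy inequality} with $k=0$ and Poincar\'e $\int|\psi|^2\le\int|\nabla\psi|^2$. Since $-\beta>0$, this gives $(1-\beta)E_1(\psi)\ge h_0(\beta)\int|\psi|^2\omega^{-2}$. Maximizing $h_0(\beta)/(1-\beta)$ over $\beta<0$ is a one-variable calculus problem. Substituting $s=1-\beta$, the optimum is at $s=\sqrt{d/2}$, i.e.\ $\beta=1-\sqrt{d/2}$, which is negative exactly when $d>2$. It yields $\frac d2(1-\sqrt{2/d})^2=\mathcal K^1(d)$.

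\textbf{Odd $\ell\ge3$.} Apply \eqref{weighted Hardy inequality} with $k=\ell-1$:
$$h_{\ell-1}(\beta)\int|\psi|^2\omega^{-2\ell}\le \sum_i\int|\psi_i|^2\omega^{-2(\ell-1)}-\beta\int|\psi|^2\omega^{-2(\ell-1)}.$$
Apply the order-$(\ell-1)$ inequality to each $\psi_i$ and use (ii): the first term is at most $E_\ell(\psi)/\mathcal K^{\ell-1}$. Apply the same inequality to $\psi$ and use (i): the second term is at most $-\beta E_{\ell-1}(\psi)/\mathcal K^{\ell-1}\le -\beta E_\ell(\psi)/\mathcal K^{\ell-1}$. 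This gives $\frac{h_{\ell-1}(\beta)}{1-\beta}\mathcal K^{\ell-1}\int|\psi|^2\omega^{-2\ell}\le E_\ell(\psi)$. Taking the supremum over admissible $\beta$ gives the odd formula.

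\textbf{Even $\ell\ge2$.} Start from the same inequality, but handle $\int|\nabla\psi|^2\omega^{-2(\ell-1)}$ via Lemma \ref{lem: weight Hardy of order two} with $k=\ell-2$ and parameter $\alpha$, where $r_{\ell-2}(\alpha)>0$:
$$r_{\ell-2}(\alpha)\int|\nabla\psi|^2\omega^{-2(\ell-1)}\le\int|\Delta\psi|^2\omega^{-2(\ell-2)}+(-\alpha+\ell-2)\int|\nabla\psi|^2\omega^{-2(\ell-2)}.$$
Both weights on the right are of order $\ell-2$, and the coefficient $-\alpha+\ell-2$ is positive.
\begin{itemize}
\item The order-$(\ell-2)$ inequality applied to $\Delta\psi$ bounds the first integral by $E_\ell/\mathcal K^{\ell-2}$.
\item The same inequality applied to each $\psi_i$, followed by (ii) and Poincar\'e, bounds the second integral by $E_{\ell-1}/\mathcal K^{\ell-2}\le E_\ell/\mathcal K^{\ell-2}$.
\item The remaining term is bounded by $-\beta\int|\psi|^2\omega^{-2(\ell-1)}\le -\beta E_\ell/\mathcal K^{\ell-1}$.
\end{itemize}
The two contributions to the $\nabla\psi$ term add up to the coefficient $-\alpha+\ell-1$. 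Combining,
$$h_{\ell-1}(\beta)\int|\psi|^2\omega^{-2\ell}\le E_\ell\Big(\tfrac{-\alpha+\ell-1}{r_{\ell-2}(\alpha)\mathcal K^{\ell-2}}-\tfrac{\beta}{\mathcal K^{\ell-1}}\Big).$$
Multiplying numerator and denominator by $r_{\ell-2}(\alpha)$ and taking the supremum over $\mathcal A^\ell_d$ gives the even formula. For $\ell=2$ we use $\mathcal K^0=1$ and read the order-$0$ inequality as the triviality $\int|\phi|^2\le\int|\phi|^2$.

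\textbf{Main obstacle.} The step needing the most care is the bookkeeping of signs and of which hypothesis applies where. Every lower-order term must enter with a nonnegative coefficient, which is guaranteed by $\alpha,\beta<0$ and $r_{\ell-2}(\alpha),h_{\ell-1}(\beta)>0$. Each function fed into an induction hypothesis must have zero average, which is why we pass through $\psi_i$ and $\Delta\psi$ rather than weighted quantities. Weighted-to-unweighted comparisons are made only through the torus Hardy inequalities of lower order, so the plain Poincar\'e inequality is the only external input. A minor technical point is that the weighted integrals are finite: this holds because $\psi$ is smooth and $d>2\ell$ makes $\omega^{-2\ell}$ integrable.
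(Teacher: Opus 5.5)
Your proof is correct and is essentially the paper's argument: in the even case, combining Lemma \ref{lem: weight Hardy of order one} at $k=\ell-1$ with Lemma \ref{lem: weight Hardy of order two} at $k=\ell-2$ is exactly the paper's inequality \eqref{almost weighted Rellich inequality}. The paper then bounds the resulting terms using the same lower-order torus inequalities (applied to $\Delta\psi$, to each $\psi_i$, and to $\psi$) together with the unweighted Poincar\'e inequality, which yields the recursive constants. The odd case and the base case also match the paper; your explicit optimizer $\beta=1-\sqrt{d/2}$ simply carries out the ``differentiate and check the sign'' step that the paper leaves implicit.
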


\begin{proof}
We prove the result iteratively as is evident from the definition of the constants. We first prove the base case $\ell = 1$. Applying Lemma \ref{lem: weight Hardy of order one} for $k=0$, along with the Poincar\'e inequality
$$
\int_{Q_d} |\psi|^2 dx \leq \int_{Q_d}|\nabla \psi|^2 dx,
$$
gives
$$
(1-\beta)\int_{Q_d} |\nabla \psi|^2 dx \geq h_0(\beta) \int_{Q_d} |\psi|^2 \omega^{-2} dx.
$$
Optimizing with respect to $\beta < 0$ and noting 
$$
\sup_{\beta < 0}\frac{h_0(\beta)}{(1-\beta)} = \frac{d}{2}\left(1- \frac{\sqrt{2}}{\sqrt{d}}\right)^2,
$$
proves the result for $\ell = 1$. The supremum follows by differentiating and checking the sign of the derivative.  

Induction step: Let $m \geq 2$ and assume \eqref{explicit constant (torus)} holds for $1 \leq \ell < m$. We introduce the following useful shorthand notation: 
$$
\mathcal{D}_k^0(\psi) = \int_{Q_d} |\psi|^2 \omega^{-2k} dx, \hspace{9pt} \mathcal{D}_k^1(\psi) = \int_{Q_d} |\nabla \psi|^2 \omega^{-2k} dx, \hspace{9pt}   \mathcal{D}_k^2(\psi) = \int_{Q_d} |\Delta \psi|^2  \omega^{-2k} dx.
$$
In this notation, for $d > 2k+2$, and $\beta$ replaced by $\alpha$, the estimate \eqref{weighted second order Hardy inequality} reads as
$$
\Dt_k^2(\psi) + (-\alpha + k) \Dt_k^1(\psi) \geq r_k(\alpha) \Dt_{k+1}^1(\psi).
$$
Applying Lemma \ref{lem: weight Hardy of order one} to this, gives
\begin{equation}\label{almost weighted Rellich inequality}
    \Dt_k^2(\psi) + (-\alpha + k)\Dt_k^1(\psi) - \beta r_k(\alpha)\Dt_{k+1}^0(\psi) \geq h_{k+1}(\beta)r_k(\alpha) \Dt_{k+2}^0(\psi),
\end{equation}
for $d > 2k+4$, under the assumption that $r_k(\alpha) > 0$.

First we consider the case that $m$ is even. Noting that $\Delta \psi$ has zero average, and using induction hypothesis for  $\ell = m-2$ along with the estimate \eqref{almost weighted Rellich inequality}, we get
\begin{align*}
    \int_{Q_d} |\Delta^{m/2} \psi|^2 dx &\geq \mathcal{K}^{m-2}(d) \Dt_{m-2}^2(\psi) \\
    &\geq \mathcal{K}^{m-2}(d) h_{m-1}(\beta)r_{m-2}(\alpha) \Dt_m^0(\psi)
    \\
    &-(-\alpha + m-2)\mathcal{K}^{m-2}(d) \Dt_{m-2}^1(\psi) + \beta r_{m-2}(\alpha) \mathcal{K}^{m-2}(d) \Dt_{m-1}^0(\psi).
\end{align*}
Applying the induction hypothesis once more, now with $\ell=m-1$ to $\psi$ and with $\ell=m-2$ to $\psi_i := \partial_{x_i} \psi$ (note that $\psi_i$ has zero average as well), we obtain
\begin{align*}
    \int_{Q_d} &|\Delta^{m/2} \psi|^2 dx + (-\alpha + m-2) \int_{Q_d} |\Delta^{(m-1)/2} \psi|^2 dx \\
    & -\frac{\beta r_{m-2}(\alpha)\mathcal{K}^{m-2}(d)}{\mathcal{K}^{m-1}(d)} \int_{Q_d} |\Delta^{(m-1)/2} \psi|^2 dx  \geq \mathcal{K}^{m-2}(d) h_{m-1}(\beta) r_{m-2}(\alpha) \Dt_m^0(\psi).   
\end{align*}
Note that all the constants in the estimate above are positive, since $(\alpha, \beta) \in \mathcal{A}_d^m$. Finally, using classical Poincar\'e inequality (follows from the Fourier expansion of $\psi$)
\begin{equation}\label{Poincare inequality}
    \int_{Q_d} |\Delta^{m/2} \psi|^2 dx \geq \int_{Q_d} |\Delta^{(m-1)/2} \psi|^2 dx,    
\end{equation}
and optimizing with respect to $(\alpha, \beta) \in \mathcal{A}_d^\ell$, the result follows for even $m$.

Next we consider the case that $m$ is odd. Using induction hypothesis for 
$\ell = m-1$ and applying Lemma \ref{lem: weight Hardy of order one} for $k = m-1$ we have
\begin{align*}
    \int_{Q_d} |\Delta^{m/2} \psi|^2 dx &\geq \mathcal{K}^{m-1}(d) \Dt_{m-1}^1(\psi) \\
    & \geq \mathcal{K}^{m-1}(d) h_{m-1}(\beta) \Dt_m^0(\psi) + \beta\mathcal{K}^{m-1}(d) \Dt_{m-1}^0(\psi).
\end{align*}
Another application of induction hypothesis for $\ell = m-1$ gives
$$
\int_{Q_d} |\Delta^{m/2} \psi|^2 dx -\beta \int_{Q_d} |\Delta^{(m-1)/2} \psi|^2 dx \geq \mathcal{K}^{m-1}(d) h_{m-1}(\beta)\Dt_m^0(\psi).
$$
Appealing to Poincar\'e inequality \eqref{Poincare inequality} once more and optimizing with respect to $\beta$ proves the result for odd $m$.
\end{proof}

Computing the exact supremum in \eqref{def: inductive constants (torus)} for general $\ell$ appears to be difficult because of the complexity of the recursion. However, by making suitable choices of the parameters $\alpha$ and $\beta$, guided by numerical experiments, we obtain asymptotically sharp lower and upper bounds for these constants. 

\begin{lemma}\label{lem: limit of torus constant}
Let $\ell \in \N$ and $\mathcal{K}^\ell(d)$ be the constant defined iteratively in \eqref{def: inductive constants (torus)}. Then 
$$
\lim_{d \rightarrow \infty}\frac{\mathcal{K}^\ell(d)}{d^{\ell}} = 2^{-\ell}.
$$
\end{lemma}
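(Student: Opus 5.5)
The plan is to prove matching bounds $\limsup_{d\to\infty} \mathcal{K}^\ell(d)/d^\ell \le 2^{-\ell}$ and $\liminf_{d\to\infty} \mathcal{K}^\ell(d)/d^\ell \ge 2^{-\ell}$ by induction on $\ell$, working directly with the recursion \eqref{def: inductive constants (torus)}. For the upper bound I would aim for the clean statement $\mathcal{K}^\ell(d)\le (d/2)^\ell$ for all $d>2\ell$. For the lower bound I would choose explicit parameters $(\alpha,\beta)\in\mathcal{A}_d^\ell$ of size $\sqrt d$. The base cases $\ell=0,1$ are immediate from the explicit formulas: $\mathcal{K}^0=1$ and $\mathcal{K}^1(d)=\tfrac d2(1-\sqrt{2/d})^2$.

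\textbf{Upper bound.} Write $\beta=-t$ with $t>0$. Since $h_k(-t)=-t^2+t(d-2k-2)/2\le t(d-2k-2)/2$, we get
\[
\frac{h_{\ell-1}(\beta)}{1-\beta}\le \frac d2\cdot\frac{t}{1+t}<\frac d2 .
\]
Hence for odd $\ell$ we have $\mathcal{K}^\ell\le \tfrac d2\,\mathcal{K}^{\ell-1}$.

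For even $\ell$, both terms in the denominator of the recursion are positive on $\mathcal{A}_d^\ell$. Dropping the first one therefore bounds the quotient by
\[
\frac{r_{\ell-2}(\alpha)\,h_{\ell-1}(\beta)}{-\beta\, r_{\ell-2}(\alpha)/\mathcal{K}^{\ell-1}}=\mathcal{K}^{\ell-1}\,\frac{h_{\ell-1}(-t)}{t}\le \mathcal{K}^{\ell-1}\,\frac d2 .
\]
By induction this gives $\mathcal{K}^\ell(d)\le (d/2)^\ell$ for every $\ell$, which settles the $\limsup$. As a sanity check, this is consistent with the test function $e^{ix_1}$ announced in the introduction. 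That test function would also bound $\mathcal{K}_{\rm opt}^\ell$, but it is not needed here.

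\textbf{Lower bound.} Assume inductively that $\mathcal{K}^{j}(d)=(d/2)^{j}(1+o(1))$ for $j<\ell$. For odd $\ell$, take $\beta=-\sqrt d$. Then $h_{\ell-1}(\beta)=\tfrac d2\sqrt d\,(1+O(d^{-1/2}))$, which is positive for large $d$, and $1-\beta=\sqrt d(1+O(d^{-1/2}))$. So the supremum is at least $\tfrac d2(1+O(d^{-1/2}))$, and hence $\mathcal{K}^\ell\ge (d/2)^\ell(1+o(1))$.

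For even $\ell$, take $\alpha=-s$ and $\beta=-t$ with $s=t=\sqrt d$. Then
\[
r_{\ell-2}(\alpha)=\tfrac{sd}{2}(1+O(d^{-1/2})),\qquad h_{\ell-1}(\beta)=\tfrac{td}{2}(1+O(d^{-1/2})),
\]
both positive for large $d$, so $(\alpha,\beta)\in\mathcal{A}_d^\ell$. Inserting the inductive asymptotics for $\mathcal{K}^{\ell-2}$ and $\mathcal{K}^{\ell-1}$, the denominator becomes $(d/2)^{-(\ell-2)}\,(s+ts)(1+o(1))$. The quotient is therefore
\[
\Bigl(\frac d2\Bigr)^{\ell}\frac{st}{s+st}(1+o(1))=\Bigl(\frac d2\Bigr)^{\ell}\frac{t}{1+t}(1+o(1))=\Bigl(\frac d2\Bigr)^{\ell}(1+o(1)).
\]

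\textbf{Main difficulty.} The step needing care is this even-case balancing. One must have $s,t\to\infty$, so that the factor $t/(1+t)\to1$ and the constant $\ell-1$ added to $s$ is negligible. One must also have $s,t=o(d)$, so that the quadratic terms $-s^2,-t^2$ and the $k(d-2k-3)/2$ shift in $r_k$ are lower order. The choice $\sqrt d$ satisfies both. The point to verify is that the relative errors from the inductive hypothesis, of order $o(1)$, multiply rather than add into the $1/t$-sized term, and this holds because both denominator terms carry the same factor $(d/2)^{-(\ell-2)}$.

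Combining the two bounds gives $\mathcal{K}^\ell(d)/d^\ell\to 2^{-\ell}$.
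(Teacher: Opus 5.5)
Your proposal is correct and follows the paper's proof. For the upper bound, both arguments use $h_{\ell-1}(\beta)\le -\beta d/2$, dropping the positive first denominator term in the even case, to get $\mathcal{K}^\ell(d)\le \frac d2\mathcal{K}^{\ell-1}(d)$. For the lower bound, both insert parameters of size about $-\sqrt d$ into the recursion together with the inductive asymptotics. The only difference is that the paper uses the shifted values $\alpha=\sqrt{2\ell-2}-\sqrt d$, $\beta=\sqrt{2\ell}-\sqrt d$, which lie in $\mathcal{A}_d^\ell$ for every $d>2\ell$. Your unshifted choice $\alpha=\beta=-\sqrt d$ lies there only for large $d$, and that is all the limit requires.
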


\begin{proof}
Let $\ell \geq 2$. For $(\alpha, \beta) \in \mathcal{A}_d^\ell$, the upper bound
$$
h_{\ell-1}(\beta) \leq -\beta d/2,
$$
implies for even $\ell$,
$$
\mathcal{K}^\ell(d) \leq \sup \frac{r_{\ell-2}(\alpha) h_{\ell-1}(\beta) \mathcal{K}^{\ell-1}(d)}{-\beta r_{\ell-2}(\alpha)} \leq d/2 \mathcal{K}^{\ell-1}(d).
$$
For odd $\ell$ we get
$$
\mathcal{K}^\ell(d) \leq \mathcal{K}^{\ell-1}(d) \frac{h_{\ell-1}(\beta)}{-\beta} \leq d/2 \mathcal{K}^{\ell-1}(d). 
$$
Hence, for $\ell \geq 2$ we have
\begin{equation}\label{limsup torus}
    \limsup_{d \rightarrow \infty}\frac{\mathcal{K}^\ell(d)}{d^\ell} \leq \frac{1}{2}\limsup_{d \rightarrow \infty} \frac{\mathcal{K}^{\ell-1}(d)}{d^{\ell-1}}.
\end{equation}
For the lower bound, we make the following choice of parameters: $\alpha = \sqrt{2\ell-2} - \sqrt{d}$ and $\beta = \sqrt{2\ell} - \sqrt{d}$. Note that for $d > 2\ell$, this choice of $(\alpha, \beta) \in \mathcal{A}_d^\ell$, since direct computation gives
$$
h_{\ell-1}(\beta) = \frac{\left(\sqrt{d}- \sqrt{2\ell}\right)^2 \left(\sqrt{d}+\sqrt{2\ell}-2\right)}{2},
$$
and 
$$
r_{\ell-2}(\alpha) = \frac{\left(\sqrt{d}- \sqrt{2\ell-2}\right)^2 \left(\sqrt{d}+\sqrt{2\ell-2} -2\right)}{2} + \frac{\left(\ell-2\right)\left(d-2\ell+1\right)}{2}.
$$
Using these expressions, we deduce 
$$
h_{\ell-1}(\beta) = \frac{d \sqrt{d}}{2} (1 - o(1)), \hspace{9pt} r_{\ell-2}(\beta) \frac{d \sqrt{d}}{2} (1 - o(1)).
$$
For odd $\ell$ this gives
\begin{equation}\label{liminf torus odd l}
    \mathcal{K}^\ell(d) \geq \mathcal{K}^{\ell-1}(d) \frac{h_{\ell-1}(\beta)}{1-\beta} = \frac{d}{2}\mathcal{K}^{\ell-1}(d) (1-o(1)).    
\end{equation}
Similarly for even $\ell$ we get
\begin{equation}\label{liminf torus even l}
    \mathcal{K}^\ell(d) \geq \frac{r_{\ell-2}(\alpha) h_{\ell-1}(\beta)}{\frac{-\alpha + \ell -1 }{\mathcal{K}^{\ell-2}(d)} - \beta \frac{r_{\ell-2}(\alpha)}{ \mathcal{K}^{\ell-1}(d)}}.
\end{equation}
We now prove the result by induction with respect to parameter $\ell$. Assume $\ell \geq 2$ and the result holds true for smaller $\ell's$. If $\ell$ is odd, then the result follows from \eqref{limsup torus} and \eqref{liminf torus odd l}. For even $\ell$, asymptotics of $h$ and $r$ along with induction hypothesis gives
$$
\liminf_{d \rightarrow \infty} \frac{\mathcal{K}^\ell(d)}{d^\ell} \geq 2^{-\ell}.
$$
To obtain this we used
$$
\frac{-\alpha+\ell-1}{\mathcal{K}^{\ell-2}(d)} = \mathcal{O}(d^{5/2-\ell}),
$$
and
$$
\lim_{d \rightarrow \infty} -\beta \frac{r_{\ell-2}(\alpha)}{\mathcal{K}^{\ell-1}(d) d^{3-\ell}} = 2^{\ell-2}.
$$

Using \eqref{limsup torus} once more proves the result for even $\ell$. It remains to check the result for $\ell=1$, which follows from the exact expression of the constant $\mathcal{K}^1(d)$ from \eqref{def: inductive constants (torus)}.
\end{proof}

\subsection{Hardy inequalities on the lattice}
Based on the Fourier reduction technique, we derive discrete Hardy inequalities using the results from the previous section. We define the corresponding constants iteratively: Let $\mathcal{A}^\ell_d$ be the parameter set defined in \eqref{def: inductive constants (torus)}, then we define
\begin{equation}\label{def: inductive constants (lattice)}
    \begin{split}
        &\mathcal{C}^0(d) = 1, \hspace{9pt} \mathcal{C}^1(d) = 2d \left(1 - \frac{\sqrt{2}}{\sqrt{d}}\right)^2,\\
         \text{for $\ell \geq 2$ even}, \hspace{9pt} &\mathcal{C}^\ell(d) = \sup_{(\alpha, \beta) \in \mathcal{A}_d^\ell }\frac{4 r_{\ell-2}(\alpha) h_{\ell-1}(\beta)}{\frac{-\alpha + \ell -1 }{4\mathcal{C}^{\ell-2}(d)} - \beta \frac{r_{\ell-2}(\alpha)}{ \mathcal{C}^{\ell-1}(d)}},\\
         \text{for $\ell \geq 2$ odd}, \hspace{9pt} & \mathcal{C}^\ell(d) = 4\mathcal{C}^{\ell-1}(d) \sup_{(\alpha, \beta) \in \mathcal{A}_d^\ell} \frac{h_{\ell-1}(\beta)}{1-\beta},
    \end{split}  
\end{equation}
We note that $\mathcal{C}^\ell(d) = 4^\ell \mathcal{K}^\ell(d)$. This follows from the definition of these constants.
\begin{theorem}\label{thm: explict lattice constant}
Let $\ell \in \N$ and $\mathcal{C}^\ell(d)$ be the constant as defined in \eqref{def: inductive constants (lattice)}. Then for $d > 2\ell$, the following discrete Hardy inequalities hold 

\begin{equation}\label{explicit lattice constant}
    \sum_{n \in \Z^d} |\Delta^{\ell/2} u(n)|^2 \geq \mathcal{C}^\ell(d) \sum_{n \in \Z^d} \frac{|u(n)|^2}{|n|^{2\ell}},
\end{equation}
for all functions $u \in C_c(\Z^d)$ with $u(0) = 0$. Moreover, 
\begin{equation}\label{lattice constant limit}
    \lim_{d\rightarrow \infty}\frac{\mathcal{C}^\ell(d)}{d^\ell} = 2^\ell.
\end{equation}
\end{theorem}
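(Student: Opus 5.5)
The plan is to transfer Theorem \ref{thm: explict torus constant} to the lattice through Lemma \ref{lem: discrete to continuous}, following the duality argument sketched in the overview. Fix $u\in C_c(\Z^d)$ with $u(0)=0$. Let $\psi\in C^\infty_{\text{per}}(\overline{Q_d})$ be the zero-average function provided by Lemma \ref{lem: discrete to continuous}. Then
\[
\sum_{n}\frac{|u(n)|^2}{|n|^{2\ell}}=\int_{Q_d}|\Delta^{\ell/2}\psi|^2\,dx,
\qquad
\sum_n|\Delta^{\ell/2}u(n)|^2=4^\ell\int_{Q_d}|\Delta^\ell\psi|^2\omega^{2\ell}\,dx .
\]
Periodicity allows us to integrate by parts $\ell$ times. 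This holds in both parities of $\ell$, using $\Delta^{\ell/2}=\nabla\Delta^k$ in the odd case. Writing $\omega^{\ell}\cdot\omega^{-\ell}$ inside the integral and applying Cauchy--Schwarz gives
\[
\int_{Q_d}|\Delta^{\ell/2}\psi|^2\,dx=(-1)^\ell\int_{Q_d}\overline{\psi}\,\Delta^\ell\psi\,dx\le\Big(\int_{Q_d}|\Delta^\ell\psi|^2\omega^{2\ell}\Big)^{1/2}\Big(\int_{Q_d}|\psi|^2\omega^{-2\ell}\Big)^{1/2}.
\]
The singular integral $\int|\psi|^2\omega^{-2\ell}$ is finite for $d>2\ell$, because $\omega^2\sim|x|^2/4$ near the origin.

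Since $d>2\ell$ and $\psi$ has zero average, Theorem \ref{thm: explict torus constant} bounds the last factor:
\[
\int_{Q_d}|\psi|^2\omega^{-2\ell}\le\mathcal{K}^\ell(d)^{-1}\int_{Q_d}|\Delta^{\ell/2}\psi|^2 .
\]
Let $A=\int|\Delta^{\ell/2}\psi|^2$. Combining the two displays gives
\[
A\le 4^{-\ell/2}\Big(\sum_n|\Delta^{\ell/2}u|^2\Big)^{1/2}\mathcal{K}^\ell(d)^{-1/2}A^{1/2}.
\]
If $A=0$ the inequality is trivial. Otherwise we divide by $A^{1/2}$ and square, obtaining
\[
\sum_n|\Delta^{\ell/2}u(n)|^2\ge 4^\ell\mathcal{K}^\ell(d)\,A=4^\ell\mathcal{K}^\ell(d)\sum_n\frac{|u(n)|^2}{|n|^{2\ell}}.
\]
It remains to verify $\mathcal{C}^\ell(d)=4^\ell\mathcal{K}^\ell(d)$, which proves \eqref{explicit lattice constant}. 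We do this by induction on $\ell$ directly from the recursions \eqref{def: inductive constants (torus)} and \eqref{def: inductive constants (lattice)}.
\begin{itemize}
\item For $\ell=0,1$ it is immediate, since $2d=4\cdot d/2$.
\item For odd $\ell$ the factor $4$ in the recursion multiplies $4^{\ell-1}\mathcal{K}^{\ell-1}$.
\item For even $\ell$, substituting $\mathcal{C}^{\ell-2}=4^{\ell-2}\mathcal{K}^{\ell-2}$ and $\mathcal{C}^{\ell-1}=4^{\ell-1}\mathcal{K}^{\ell-1}$ pulls out a common factor $4^{-(\ell-1)}$ from the denominator. Together with the numerator factor $4$, this yields $4^\ell$.
\end{itemize}
This bookkeeping check is the only step where care is needed. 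If the displayed normalization of the $\mathcal{C}$-recursion turned out to be off by a power of $4$, I would simply take $4^\ell\mathcal{K}^\ell(d)$ as the lattice constant.

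Finally, \eqref{lattice constant limit} follows at once: $\mathcal{C}^\ell(d)/d^\ell=4^\ell\mathcal{K}^\ell(d)/d^\ell\to4^\ell2^{-\ell}=2^\ell$ by Lemma \ref{lem: limit of torus constant}.
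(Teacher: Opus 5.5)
Your proof is correct and follows the paper's argument. Like the paper, you use the Fourier reduction of Lemma~\ref{lem: discrete to continuous}, integration by parts with Cauchy--Schwarz, the torus bound of Theorem~\ref{thm: explict torus constant}, the identity $\mathcal{C}^\ell(d)=4^\ell\mathcal{K}^\ell(d)$, and Lemma~\ref{lem: limit of torus constant} for the limit; your inductive check of that identity, which the paper only asserts, is correct, so your hedge about a possible misnormalization is unnecessary.
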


\begin{proof}
Appealing to the Fourier reduction in Lemma \ref{lem: discrete to continuous}, it is sufficient to prove the corresponding inequality on the torus. To this end, applying integration by parts and H\"older's inequality gives
$$
\int_{Q_d} |\Delta^{\ell/2} \psi|^2 dx = (-1)^\ell \int_{Q_d} \overline{\psi} \Delta^\ell \psi dx \leq \left(\int_{Q_d} |\Delta^\ell \psi|^2 \omega^{2\ell} dx\right)^{1/2} \left(\int_{Q_d} |\psi|^2 \omega^{-2\ell} dx \right)^{1/2}.
$$
Next applying Theorem \ref{thm: explict torus constant}, and lifting the inequality back to the lattice via Lemma \ref{lem: discrete to continuous} and finally noting that $\mathcal{C}^\ell(d) = 4^\ell \mathcal{K}^\ell(d)$ proves the result. 

The limit of $C^\ell(d)$ follows from $\mathcal{C}^\ell(d) = 4^\ell \mathcal{K}^\ell(d)$ and Lemma \ref{lem: limit of torus constant}.
\end{proof}

\section{Asymptotically sharp lower and upper bounds}\label{sec: main result proof}
\subsection{Proof of Theorem \ref{thm: torus asymptotics}}
Theorem \ref{thm: explict torus constant} along with the Lemma \ref{lem: limit of torus constant} proves
$$
\liminf_{d \rightarrow \infty} \frac{\mathcal{K}_\text{opt}^\ell(d)}{d^\ell} \geq 2^{-\ell}.
$$
It remains to show that this lower bound is attained. To do so, take $\psi = e^{i x_1}$, which is smooth and has zero average. Using this as our test function along with Jensen's inequality yields
$$
\mathcal{K}_\text{opt}^\ell(d) \leq |Q_d| \left(\int_{Q_d} \omega^{-2\ell} dx\right)^{-1} \leq |Q_d|^{-\ell} \left(\int_{Q_d} \omega^2 dx\right)^\ell = (d/2)^\ell.
$$
This proves 
\begin{align*}
    \limsup_{d \rightarrow \infty} \frac{\mathcal{K}_\text{opt}^\ell(d)}{d^\ell} \leq 2^{-\ell}. \tag*{\qedsymbol}
\end{align*}

\subsection{Proof of Theorem \ref{thm: discrete asymptotics}}
Theorem \ref{thm: explict lattice constant} gives the lower bound
$$
\liminf_{d \rightarrow \infty} \frac{\mathcal{C}_\text{opt}^\ell(d)}{d^\ell} \geq 2^\ell. 
$$
For upper bound we take the $u = \chi_{\{|n|=1\}}$, where $\chi_\Omega$ is the characteristic function of $\Omega$.  
\begin{equation}\label{lattice constant upper bound}
    \mathcal{C}_\text{opt}^\ell(d) \leq \frac{1}{2d} \sum_{n \in \Z^d} |\Delta^{\ell/2} u|^2 = \frac{4^\ell}{2d} \int_{Q_d} |\widehat{u}|^2 \omega^{2\ell} dx,
\end{equation}
where $\widehat{u}(x) := (2\pi)^{-\frac{d}{2}}\sum\limits_{n \in \Z^d}u(n) e^{-i n \cdot x}$ is the Fourier transform of $u$. The last identity follows from Parseval's identity, see \cite[Lemmas 2.2, 2.3]{gupta} for details. Using half angle formula for $\cos$ and binomial theorem, the integral can be expressed as
\begin{align*}
    \int_{Q_d}|\widehat{u}|^2 \omega^{2\ell} dx &= 4(2\pi)^{-d} \frac{d^\ell}{2^\ell} \sum_{k=0}^\ell (-1)^k {\ell \choose k} \int_{Q_d}\left(\sum_{j=1}^d \cos x_j\right)^2 \left(\frac{1}{d}\sum_{j=1}^d \cos x_j \right)^{k} dx\\
    & = 2d \frac{d^\ell}{2^\ell} + 4(2\pi)^{-d} \frac{d^\ell}{2^\ell} \sum_{k=2}^\ell (-1)^k {\ell \choose k} \int_{Q_d}\left(\sum_{j=1}^d \cos x_j\right)^2  \left(\frac{1}{d}\sum_{j=1}^d \cos x_j \right)^{k} dx.
\end{align*}
In the last step we used
$$
\int_{Q_d} \left(\sum_j \cos x_j \right)^3 dx = 0.
$$
Next we estimate the last term, which we denote by $\mathcal{R}^\ell_d$. Noting that $1/d \sum_j \cos x_j \leq 1$ in the absolute value we get
\begin{align*}
    \mathcal{R}^\ell_d &\leq 4 (2\pi)^{-d} \frac{d^\ell}{2^\ell} \left(\sum_{k=2}^\ell {\ell \choose k}\right) \int_{Q_d} \left(\sum_{j=1}^d \cos x_j\right)^2  \left(\frac{1}{d}\sum_{j=1}^d \cos x_j \right)^{2} dx\\
    & \leq 4(2\pi)^{-d} d^{\ell-2} \int_{Q_d}  \left(\sum_{j=1}^d \cos x_j \right)^{4} dx = d^{\ell-2}\left(3 d^2 - \frac{3d}{2}\right). 
\end{align*}
Using this estimate back in \eqref{lattice constant upper bound} gives 
\begin{equation}\label{limsup of lattice constant}
    \mathcal{C}_\text{opt}^\ell(d) \leq 2^\ell d^\ell + \frac{3}{2} \cdot 4^\ell d^{\ell-1} - 3 \cdot 4^{\ell-1} d^{\ell-2}.  
\end{equation}
This proves the desired upper bound
\begin{align*}
    \limsup_{d \rightarrow \infty}  \frac{\mathcal{C}_\text{opt}^\ell(d)}{d^\ell} \leq 2^\ell. \tag*{\qedsymbol}
\end{align*}

\subsection*{Acknowledgments}
We thank Yehuda Pinchover for his careful reading and for his valuable suggestions.


\end{document}